\newtheorem{theorem}{Theorem}
\newtheorem{lemma}{Lemma}
\newtheorem{corollary}{Corollary}
\newtheorem{remark}{Remark}
\newtheorem{definition}{Definition}
\newtheorem{example}{Example}
\DeclareMathOperator{\grad}{grad}  
\newtheorem{algorithm}{Algorithm}
\newtheorem{strategy}{Strategy}
\begin{document}

\title{Gradient Method  for Optimization on Riemannian Manifolds with Lower Bounded Curvature} 

\author{
O. P. Ferreira \thanks{IME/UFG, Avenida Esperan\c{c}a, s/n, Campus Samambaia,  Goi\^ania, GO, 74690-900, Brazil (e-mails: {\tt orizon@ufg.br},  {\tt mauriciosilvalouzeiro@gmail.com},  {\tt lfprudente@ufg.br}).}
\and
 M. S. Louzeiro \footnotemark[1]
\and
L. F. Prudente \footnotemark[1]
}

\maketitle
\begin{abstract}

The gradient  method for minimize a differentiable convex function on Riemannian manifolds  with lower bounded sectional curvature is  analyzed in this paper.  The analysis of  the method is presented  with three different finite procedures for determining the stepsize, namely, Lipschitz  stepsize,  adaptive stepsize  and  Armijo's stepsize.   The first procedure requires that the objective function has Lipschitz continuous gradient, which is not necessary for the other approaches. Convergence of the whole sequence to a minimizer,  without any level set boundedness assumption,  is  proved.   Iteration-complexity bound for  functions with  Lipschitz continuous gradient is also presented. Numerical experiments are provided to illustrate the effectiveness of the method in this new setting  and certify the obtained theoretical results. In particular, we consider the problem of finding the Riemannian center of mass and the so-called Karcher's mean. Our numerical experiences indicate that the adaptive stepsize is a promising scheme that is worth considering.

\noindent{\bf Keywords:}  Gradient method, convex programming,   Riemannian manifold,  lower bounded curvature, iteration-complexity bound.\\
\noindent{\bf AMS subject classification:} \,90C33\,$\cdot$\,49K05\,$\cdot$\, 47J25
\end{abstract}
\section{Introduction}

We consider the   gradient method to solve the optimization problem defined by:
\begin{equation} \label{eq:OptP}
\min \{ f(p) ~:~   p\in \mathcal{M}\},
\end{equation}
where the constraint set $\mathcal{M}$ is  endowed  with a structure of a  {\it complete   Riemannian manifold with lower bounded curvature}  and $f:\mathcal{M}\to \mathbb{R}$ is a {\it continuously differentiable convex function}.  It is well known that, in several cases,    by endowing $\mathcal{M}$ with a suitable  Riemannian metric, an Euclidean non-convex constrained problem  can be seen as a Riemannian convex unconstrained  problem.  In addition to this property, we will  present some examples showing  that  endowing the set of constraints with a suitable  Riemannian metric the objective function can be also {\it Riemannian  Lipschitz gradient}.   Consequently,   the geometric and algebraic structure that comes from of the Riemannian metric makes  possible to greatly reduce the computational cost for solving such  problems.  Indeed, it is also widely known that,  in several contexts,  the iteration complexity of the gradient method for convex optimization problems with Lipschitz gradient  is much lower than  for general nonconvex problems; see for example~\cite{BentoFerreiraMelo2017,JeurisVandebrilVandereycken2012,Rapcsak1997,SraHosseini2015, ZhangReddiSra2016} and references therein.  Furthermore, many Euclidean optimization problems are naturally posed on the Riemannian context;  see \cite{EdelmanAriasSmith1999,JeurisVandebrilVandereycken2012,  Smith1994, SraHosseini2015}. Then,  to take advantage of the Riemannian geometric  structure, it is preferable to treat these  problems as the ones of  finding singularities of gradient vector fields  on  Riemannian manifolds rather than using Lagrange multipliers  or projection methods; see \cite{Luenberger1972,   Smith1994, Udriste1994}.   Accordingly, constrained optimization problems can be viewed as unconstrained ones from a Riemannian geometry point of view. Moreover, Riemannian structures can also opens up new research directions  that aid in developing competitive algorithms;  see \cite{AbsilMahonySepulchre2008, EdelmanAriasSmith1999, JeurisVandebrilVandereycken2012, NesterovTodd2002, Smith1994, SraHosseini2015}.   For this purpose,  extensions of concepts and techniques of optimization from Euclidean  space to Riemannian context have been quite frequently in recent years. Papers dealing with this subject include, but are not limited to \cite{LiMordukhovichWang2011,   LiYao2012, WangLiWangYao2015,WangLiYao2015,Manton2015,ZhangReddiSra2016, ZhangSra2016}.

The gradient method is one of the oldest methods for the minimization of a differentiable function in Euclidean space.  Despite having  slow convergence rate, the simplicity of implementation, the low memory requirements and cost per iteration,  make the gradient  method quite attractive  to solve large-scale optimization problems. Indeed, the computational cost per iteration  is mildly dependent on the dimension of the problem, yielding computational efficiency for  this method; see\cite{JeurisVandebrilVandereycken2012, Nesterov2013, Raydan1997}.   In addition, the gradient method is the starting point for  designing many more sophisticated and efficient algorithms, including   fast gradient method,  accelerated gradient method and Barzilai-Borwein method; see \cite{Nesterov2004, Yuan2008} for a  comprehensive study on this subject.    To the best of our knowledge  the gradient method was the first optimization method to be considered in a Riemannian setting.  In order to deal with contained optimization problems in the Euclidean space,  Luenberger~\cite{Luenberger1972}   proposed and established  important  convergence properties of gradient method by using    the  Riemannian  structure of the constraint set induced  by  the  Euclidean  structure.  Since then, the gradient method  has been studied in  general  Riemannian  manifold. Some early works dealing with this method  include \cite{ Gabay1982, Udriste1994, Smith1994, Rapcsak1997}.   However,   the obtained  convergence results  in these previous works demand that the initial points  of the sequence  belong to a bounded level set of the objective function  establishing  only that all its cluster points are stationary.   By assuming  convexity of the objective function and that the  manifolds has {\it non-negative  curvature}, it has been proven  in \cite{daCruzNetoLimaOliveira1998} that,  for a suitable choice of the stepsize and without any level set boundedness assumption,  the whole sequence converges to a solution.  Recently  new  important properties of the gradient method in Riemannian settings  have been obtained. For instance,  in \cite{ZhangSra2016}  the authors  provided iteration-complexity bounds for convex optimization problems on Hadamard manifolds.  In \cite{BoumalAbsilCartis2016}, the authors established iteration-complexity bounds   without any assumption on the    convexity of the  problem and curvature of the manifold. In \cite{BiniIannazzo2013}  the   gradient method is considered  to compute  the  Karcher mean, which is a strong convex function in the cone of symmetric positive definite matrices  endowed with a suitable  Riemannian metric.  In  \cite{AfsariTronVidal2013} is  studied  properties  of the gradient method  for the problem of finding the global Riemannian center of mass of a set of data points on a Riemannian manifold. In \cite{glaydstontr} is extended the convergence analysis  of the gradient method to the Hadamard setting for continuously differentiable functions which satisfy the Kurdyka-Lojasiewicz inequality.

By the aforementioned we see that the gradient method remains a subject of considerable interest.  In spite of  its long history,  the full convergence of the sequence generated by the gradient  method in a general   Riemannian manifolds  has not yet been established.  However, as far as we know,  the full convergence of the sequence generated by the gradient  method under convexity of the objective function and {\it lower boundedness  of the curvature} of  Riemannian manifolds  is a new contribution of this paper, which adds important results in the available convergence theory of this  method. The analysis of  the method is presented  with three different finite procedures for determining the stepsize, namely, Lipschitz  stepsize,  adaptive stepsize  and  Armijo's stepsize. It should be noted that  we use a recent inequality established in\cite{WangLiWangYao2015, WangLiYao2015}. Numerical experiments are provided to illustrate the effectiveness of the method in this new setting  and certify the  obtained theoretical results.  In particular, we consider  the problem of finding the  Riemannian mass center and the so-called Karcher's mean. Our experiments indicate that adaptive size is a promising scheme that is worth considering.

This paper is organized as follows.  Section~\ref{sec:aux} presents some definitions and  preliminary results  related to the Riemannian geometry  that are important throughout our study.  In Section~\ref{sec:Gradient}, we state the gradient algorithm and the three different finite procedures for determining the  stepsize. Section~\ref{sec.main}  is devoted to the asymptotic convergence analysis of the method, and  in Section~\ref{Sec:IteCompAnalysis} the  iteration-complexity bound is presented.  Section~\ref{Sec:Examples}   provides some  examples of functions satisfying the assumptions of our  results in the previous sections. In Section~\ref{Sec:NumExp},  we present some numerical experiments to illustrate the behavior of the method. The last section contains some  conclusions.
\section{Notations and basic concepts} \label{sec:aux}
In this section, we recall  some  concepts, notations, and basics results  about Riemannian manifolds.   For more details we refer the reader to \cite{doCarmo1992, Sakai1996, Udriste1994,  Rapcsak1997}.

We denote by $T_p\mathcal{M}$ the {\it tangent space} of a finite dimensional  Riemannian manifold $\mathcal{M}$ at $p$.  The corresponding norm associated to the Riemannian metric $\langle \cdot ,  \cdot \rangle$ is denoted by $\|  \cdot \|$.   We use $\ell(\alpha)$ to denote the length of a piecewise smooth curve $\alpha:[a,b]\to \mathcal{M}$. The Riemannian  distance  between $p$ and $q$   in  $\mathcal{M}$ is denoted  by $d(p,q)$,  which induces the original topology on $\mathcal{M}$, namely,  $( \mathcal{M}, d)$, which is a complete metric space where bounded and closed subsets are compact. The {\it closed metric ball} in $\mathcal{M}$ centered at the point $p\in\mathcal{M}$ with radius $r>0$ is denoted by $B[p,r]$. Denote by ${\cal X}(\mathcal{M})$, the space of smooth vector fields on $\mathcal{M}$. Let $\nabla$ be the Levi-Civita connection associated to $(\mathcal{M}, \langle \cdot,\cdot \rangle)$. For each $t \in [a,b]$ and a piecewise smooth curve $\alpha:[a,b]\to \mathcal{M}$,  $\nabla$ induces an isometry  relative to $ \langle \cdot , \cdot \rangle  $,  $P_{\alpha,a,t} \colon T _{\alpha(a)} {\mathcal{M}} \to T _
{\alpha(t)} {\mathcal{M}}$  defined by $ P_{\alpha,a,t}\, v = V(t)$, where $V$ is the unique vector field on the curve $\alpha$ such that $ \nabla_{\alpha'(t)}V(t) = 0$ and $V(a)=v$.  The isometry $P_{\alpha,a,t}$  is called {\it parallel transport} along  of  $\alpha$ joining  $\alpha(a)$ to $\alpha(t)$ and,  when there is no confusion,  it will be denoted by  $P_{\alpha,p,q}$.  A vector field $V$ along a smooth curve $\gamma$  is said to be {\it parallel} iff $\nabla_{\gamma^{\prime}} V=0$. If $\gamma^{\prime}$ itself is parallel, we say that $\gamma$ is a {\it geodesic}. Given that the geodesic equation $\nabla_{\ \gamma^{\prime}} \gamma^{\prime}=0$ is a second order nonlinear ordinary differential equation, then the geodesic $\gamma=\gamma _{v}( \cdot ,p)$ is determined by its position $p$ and velocity $v$ at $p$. It is easy to check that $\|\gamma ^{\prime}\|$ is constant. The restriction of a geodesic to a  closed bounded interval is called a {\it geodesic segment}.  A geodesic segment joining $p$ to $q$ in $\mathcal{M}$ is said to be {\it minimal} if its length is equal to $d(p,q)$. A Riemannian manifold is {\it complete} if the geodesics are defined for any values of $t\in \mathbb{R}$. Hopf-Rinow's theorem asserts that any pair of points in a  complete Riemannian  manifold $\mathcal{M}$ can be joined by a (not necessarily unique) minimal geodesic segment.  Owing to  the completeness of the Riemannian manifold $\mathcal{M}$, the {\it exponential map} $\exp_{p}:T_{p}  \mathcal{M} \to \mathcal{M} $ is  given by $\exp_{p}v\,=\, \gamma _{v}(1,p)$, for each $p\in \mathcal{M}$.   {\it In this paper, all manifolds are assumed to be  connected,   finite dimensional, and complete}.  For $f: {\cal D} \to\mathbb{R}$ a   differentiable function on the open  set  ${\cal D} \subset \mathcal{M}$,   the Riemannian metric induces the mapping   $f\mapsto  \mbox{grad} f $  associates     its {\it gradient} via the following  rule  $\langle \mbox{grad} f(p),X(p)\rangle\coloneqq d f(p)X$, for all $p\in {\cal D}$. For a twice-differentiable function,  the mapping  $f\mapsto \mbox{hess} f$ associates    its  {\it hessian} via the rule 
$ \langle \mbox{hess} f X,X\rangle \coloneqq d^2 f(X, X)$,  for all $X \in {\cal X}({\cal D})$, where    the last equalities imply that $ \mbox{hess} f X= \nabla_{X}  \mbox{grad} f$,  for all $X \in {\cal X}({\cal D})$.  We proceeded to recall  some concepts and basic properties  about  convexity   in the  Riemannin context.   For more details   see, for example, \cite{Udriste1994,  Rapcsak1997, WangLiWangYao2015}.  For any two points $p,q\in\mathcal{M}$, $\Gamma_{pq}$ denotes the set of all geodesic segments  $\gamma:[0,1]\rightarrow\mathcal{M}$ with $\gamma(0)=p$ and $\gamma(1)=q$.   We use $\Gamma^{\Omega}_{pq}$  to denote the set of all $\gamma\in\Gamma_{pq}$ such that $\gamma(t)\in \Omega$, for all $t\in [0,1]$.   A nonempty subset  $\Omega \subset \mathcal{M}$ is said to be {\it weakly convex} if, for any $p,q\in \Omega$, there is a minimal geodesic segment joining   $p$ to $q$  belonging  $\Omega$.  A   function $f: \mathcal{D} \to {\mathbb{R}}$  is said to be {\it convex} on the set $\Omega\subset \mathcal{D}$ if $\Omega$ is weakly convex and for any $p,q\in\Omega$ and $\gamma\in\Gamma^{\Omega}_{pq}$  the composition $f\circ\gamma:[0, 1]\to\mathbb{R}$ is a convex function on $[0,1]$, i.e.,   $f\circ\gamma(t)\leq(1-t)f(p)+tf(q)$, for all $t\in[0,1]$; see \cite{WangLiWangYao2015}. For $f$  a differentiable function on $\mathcal{D}$ and  a weakly convex set  $\Omega \subset \mathcal{D}$,  we have the following  characterization:   $f$ is convex  on $\Omega$ iff there holds $f(\gamma(t))\geq f(p)+\langle \grad f(p), \gamma'(0)\rangle$, for all $p,q\in\Omega$ and $\gamma\in\Gamma^{\Omega}_{pq}$.

The following lemma plays  an important role in next sections and its proof, with some minor technical adjustments,  can be found in \cite[Lemma~3.2]{WangLiWangYao2015}; see also \cite{WangLiYao2015}. For simplifying  the notations,  let 
\begin{equation} \label{eq:KappaHat}
\kappa<0, \qquad \hat{\kappa}:=\sqrt{|\kappa|}.
\end{equation}
\begin{lemma} \label{lemli}
Let $\mathcal{M}$ be a  Riemannian manifolds with sectional curvature $K\geq\kappa$,  and $\hat{\kappa}$ be defined in \eqref{eq:KappaHat}. Assume that  $f$ is  differentiable and convex  on the set $\Omega\subset \mathcal{M}$, $p\in \Omega$  and ${\gamma}:[0,\infty)\to \mathcal{M}$ is   defined by $ {\gamma}(t)=\mbox{exp} _{p}\left(-t\ \grad f(p)\right).$
Then, for any $t\in[0,\infty)$ and $q\in \Omega$ there holds
\begin{multline*}
\cosh(\hat{\kappa}d(\gamma(t),q))\leq \cosh(\hat{\kappa}d(p,q))+\\
\hat{\kappa}\cosh(\hat{\kappa}d(p,q))\sinh(t\hat{\kappa}\left\|\grad f(p)\right\|)\left[\frac{t\left\|\grad f(p)\right\|}{2}-\frac{\tanh(\hat{\kappa}d(p,q))}{\hat{\kappa}d(p,q)} \frac{f(p)-f(q)}{\left\|\grad f(p)\right\|}\right]
\end{multline*}
and, consequently, the following inequality  holds 
\begin{multline*}
d^2({\gamma}(t),q)\leq d^2(p,q) + \\
 \frac{\sinh\left(\hat{\kappa}t\|\grad f(p)\|\right)}{\hat{\kappa}} \left[t\|\grad f(p)\|\,\frac{\hat{\kappa}d(p,q)}{\tanh\left(\hat{\kappa}d(p,q)\right)}-\frac{2}{\left\|\grad f(p)\right\|}\left(f(p)-f(q)\right)\right]. 
\end{multline*}
\end{lemma}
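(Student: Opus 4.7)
The plan is to reduce both displayed inequalities to comparison estimates in the model space of constant curvature $\kappa$, combined with convexity of $f$ at $p$. First I would set up, at $p$, the geodesic hinge whose sides are $\gamma|_{[0,t]}$ (of length $t\|\grad f(p)\|$, since geodesics have constant speed) and a minimal geodesic from $p$ to $q$ (of length $d(p,q)$), with interior angle $\theta$ between $\gamma'(0)=-\grad f(p)$ and the unit initial direction $w$ of the latter geodesic. Because $K\geq \kappa$, Toponogov's hinge comparison theorem yields a comparison hinge in the simply connected two-dimensional space form of curvature $\kappa$ with the same two side lengths and the same included angle, whose third side is no shorter than $d(\gamma(t),q)$. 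The hyperbolic law of cosines applied to this comparison hinge, together with monotonicity of $\cosh$, then gives
\begin{equation*}
\cosh(\hat\kappa d(\gamma(t),q))\leq \cosh(\hat\kappa t\|\grad f(p)\|)\cosh(\hat\kappa d(p,q))-\sinh(\hat\kappa t\|\grad f(p)\|)\sinh(\hat\kappa d(p,q))\cos\theta.
\end{equation*}

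Next I would inject convexity. The first-order characterization recalled in the excerpt gives $\langle\grad f(p),w\rangle\leq (f(q)-f(p))/d(p,q)$, and since $\cos\theta=-\langle\grad f(p),w\rangle/\|\grad f(p)\|$, this rewrites as $\cos\theta\geq (f(p)-f(q))/(d(p,q)\|\grad f(p)\|)$. Both $\sinh$ factors being nonnegative, substitution replaces $-\cos\theta$ by $-(f(p)-f(q))/(d(p,q)\|\grad f(p)\|)$ in the previous display. To match the stated form I would then use the elementary bound $\cosh x\leq 1+\tfrac{x}{2}\sinh x$ for $x\geq 0$ (which holds because the derivative of $1+\tfrac{x}{2}\sinh x-\cosh x$ is nonnegative on $[0,\infty)$) applied with $x=\hat\kappa t\|\grad f(p)\|$, turning the product $\cosh(\hat\kappa t\|\grad f(p)\|)\cosh(\hat\kappa d(p,q))$ into $\cosh(\hat\kappa d(p,q))$ plus a multiple of $\sinh(\hat\kappa t\|\grad f(p)\|)\cosh(\hat\kappa d(p,q))$; extracting the factor $\tanh(\hat\kappa d(p,q))/(\hat\kappa d(p,q))$ via $\sinh(\hat\kappa d(p,q))=\tanh(\hat\kappa d(p,q))\cosh(\hat\kappa d(p,q))$ then reproduces the first displayed inequality exactly.

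For the squared-distance inequality I would derive it from the first via a second scalar estimate: for every $b>0$, the function $a\mapsto \cosh a-\tfrac{\sinh b}{b}\cdot\tfrac{a^2}{2}$ attains its global minimum on $[0,\infty)$ at $a=b$, since its derivative is $\sinh a - a\sinh b/b$ and $\sinh a/a$ is strictly increasing. Hence $\cosh a-\cosh b\geq \tfrac{\sinh b}{b}\cdot\tfrac{a^2-b^2}{2}$ for all $a\geq 0$. Setting $a=\hat\kappa d(\gamma(t),q)$ and $b=\hat\kappa d(p,q)$, and multiplying by $2d(p,q)/(\hat\kappa\sinh(\hat\kappa d(p,q)))$, the $\tanh$ factor inherited from the first inequality cancels with the ratio $\sinh(\hat\kappa d(p,q))/\cosh(\hat\kappa d(p,q))$, and the second displayed inequality falls out algebraically. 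The degenerate cases $p=q$ and $\grad f(p)=0$ are handled by continuity, where both sides collapse to the trivial $d(p,q)\leq d(p,q)$.

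The main obstacle is the first step: the correct invocation of Toponogov's hinge comparison under only a lower bound on sectional curvature, and the verification that the included angle $\theta$ is well-defined (which requires $\grad f(p)\neq 0$ and the selection of a minimizing geodesic to $q$, both addressable by limiting arguments in degenerate cases). Once that comparison estimate is in hand, the remainder of the argument is a purely algebraic manipulation of hyperbolic identities.
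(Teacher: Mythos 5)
Your proof is correct: the Toponogov hinge comparison together with the hyperbolic law of cosines in the curvature-$\kappa$ model, the first-order convexity inequality applied along a minimizing geodesic from $p$ to $q$ lying in $\Omega$ (which exists by weak convexity and serves simultaneously as the minimal side required by the hinge theorem), and the two elementary scalar bounds $\cosh x\le 1+\tfrac{x}{2}\sinh x$ and $\cosh a-\cosh b\ge \tfrac{\sinh b}{b}\cdot\tfrac{a^2-b^2}{2}$ reproduce both displayed inequalities exactly, with the degenerate cases ($p=q$ or $\grad f(p)=0$) excluded anyway by the divisions appearing in the statement. This is essentially the same comparison-based argument as the one the paper relies on, since the paper does not prove the lemma itself but defers to Lemma 3.2 of Wang, Li, Wang and Yao (2015).
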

Next we present the definition of Lipschitz continuous gradient vector field; see \cite{da1998geodesic}.
\begin{definition} \label{Def:GradLips}
Let $f$ be a differentiable function on the set ${\cal D}$. The gradient vector  field    of $f$  is  said  to be   Lipschitz continuous on ${\cal D}$ with   constant $L\geq 0$ if,   for any $p,q\in{\cal D}$ and $\gamma\in\Gamma^{{\cal D}}_{pq}$, it holds that 
$\left\|P_{{\gamma},p,q} \grad f(p)- \grad f(q)\right\|\leq L\ell(\gamma).$
\end{definition}
 The {\it norm of the hessian}   $\mbox{hess}\,f$   at $p\in{\mathcal M}$  is given by
\begin{equation} \label{eq:DefNormHess}   
\|\mbox{hess}\,f(p)\|:= \sup  \left\{ \left\|\mbox{hess}\,f(p)v\right \|~:~  v\in T_{p}\mathcal{M}, ~\|v\|=1\right\}.
\end{equation} 
 In the following result we present a characterization for  twice continuously differentiable  functions with Lipschitz continuous gradient vector  field, which has similar  proof of its Euclidean counterpart   and will be omitted here.
\begin{lemma} \label{le:CharactGL}
Let $f:\mathcal{D} \to \mathbb{R}$ be a twice continuously differentiable  function. The gradient vector  field   of $f$  is Lipschitz continuous with   constant $L\geq 0$ if,  and only if,  there exists $L\geq 0$ such that $\|\mbox{hess}\,f(p)\|\le L$, for all $p\in \mathcal{D}$.
\end{lemma}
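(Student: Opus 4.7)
The plan is to prove both directions by reducing the Lipschitz condition along an arbitrary geodesic segment to an integral of the Hessian, and conversely recovering the Hessian from the Lipschitz inequality via a limit along a short geodesic.

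For the direction ($\Leftarrow$), I would fix $p,q\in\mathcal{D}$, a curve $\gamma\in\Gamma^{\mathcal{D}}_{pq}$ with $\gamma(0)=p$, $\gamma(1)=q$, and consider the vector field $W(t):=P_{\gamma,0,t}^{-1}\grad f(\gamma(t))\in T_p\mathcal{M}$. Since parallel transport commutes with covariant differentiation along $\gamma$, one gets
\begin{equation*}
W'(t)=P_{\gamma,0,t}^{-1}\nabla_{\gamma'(t)}\grad f(\gamma(t))=P_{\gamma,0,t}^{-1}\hess f(\gamma(t))\gamma'(t).
\end{equation*}
Integrating on $[0,1]$ in the fixed vector space $T_p\mathcal{M}$ and using the isometry property of $P_{\gamma,0,t}$ gives
\begin{equation*}
\bigl\|P_{\gamma,q,p}\grad f(q)-\grad f(p)\bigr\|\leq\int_0^1\|\hess f(\gamma(t))\gamma'(t)\|\,dt\leq L\int_0^1\|\gamma'(t)\|\,dt=L\,\ell(\gamma).
\end{equation*}
Applying the isometry $P_{\gamma,p,q}$ to the left-hand side yields exactly the inequality in Definition~\ref{Def:GradLips}.

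For the direction ($\Rightarrow$), I would show pointwise that $\|\hess f(p)v\|\leq L$ for every unit $v\in T_p\mathcal{M}$. Fix such $p$ and $v$, pick $r>0$ small enough that $\exp_p(tv)\in\mathcal{D}$ for $t\in[0,r]$, and consider the geodesic $\gamma_v(s)=\exp_p(sv)$, whose length on $[0,t]$ is $t$. The Lipschitz hypothesis gives
\begin{equation*}
\frac{1}{t}\bigl\|P_{\gamma_v,p,\gamma_v(t)}\grad f(p)-\grad f(\gamma_v(t))\bigr\|\leq L.
\end{equation*}
Transporting back to $T_p\mathcal{M}$ via the isometry $P_{\gamma_v,\gamma_v(t),p}$ and letting $t\to 0^+$, the left side converges to $\|\nabla_v\grad f(p)\|=\|\hess f(p)v\|$, by the very definition of the covariant derivative along $\gamma_v$. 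Taking the supremum over unit $v$ and using \eqref{eq:DefNormHess} gives $\|\hess f(p)\|\leq L$.

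The main technical obstacle is just the first direction's interchange of the covariant derivative with parallel transport, and handling a piecewise smooth $\gamma$ (which one deals with by splitting the integral at the finitely many breakpoints). The second direction is routine once one recognizes that the pulled-back difference quotient is exactly the limit defining $\nabla_{\gamma_v'(0)}\grad f$ at $p$. No result beyond the standard calculus of parallel transport and the identity $\hess f\,X=\nabla_X\grad f$ recalled in the excerpt is required.
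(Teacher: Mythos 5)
Your proof is correct and is exactly the standard argument the paper has in mind when it omits the proof as being ``similar to its Euclidean counterpart'': for one direction you integrate the parallel-transported Hessian along the connecting geodesic, and for the other you recover $\nabla_v \grad f(p)$ as the limit of the pulled-back difference quotient, then take the supremum over unit $v$ using \eqref{eq:DefNormHess}. The only cosmetic remark is that the curves in Definition~\ref{Def:GradLips} are geodesic segments $\gamma\in\Gamma^{\mathcal{D}}_{pq}$, hence smooth, so the splitting of the integral at breakpoints that you mention as a technical obstacle is not actually needed.
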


The next  lemma  can be found in  \cite[Corollary 2.1]{BentoFerreiraMelo2017} with minor adjustment. Its  proof follows from  the definition  of convexity of functions and the fundamental theorem of calculus.
\begin{lemma} \label{le:lc}
Let $f$ be a differentiable function on the set ${\cal D}$ and $a>0$. Assume that  $\grad f$  is   Lipschitz continuous   on ${\cal D}$ with   constant $L\geq 0$ and $p\in \Omega$.  If  $ \exp_{p}(- t \grad f(p))\in {\cal D}$, for all $ t\in [0,a]$, then   there holds
\[
f( \exp_{p}(- t \grad f(p))) \leq f(p)- \left(1-\frac{L}{2}t\right)t\left\|\grad f(p)\right\|^{2}, \qquad  \forall~t\in [0,a].
\]
\end{lemma}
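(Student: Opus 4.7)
The plan is to mirror the classical Euclidean descent lemma by passing to the one-dimensional function $\varphi(t):=f(\gamma(t))$ along the geodesic $\gamma(t)=\exp_{p}(-t\grad f(p))$, and to replace the Euclidean Lipschitz estimate with the Riemannian version encoded in Definition~\ref{Def:GradLips}. Despite the wording of the hint, convexity is not really needed; the argument rests only on the geodesic structure, the definition of Lipschitz gradient, and the fundamental theorem of calculus.

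First I would exploit the geodesic structure of $\gamma$: since it is a geodesic with initial velocity $\gamma'(0)=-\grad f(p)$, its speed is constant, so $\|\gamma'(s)\|=\|\grad f(p)\|$ and $\ell(\gamma|_{[0,s]})=s\|\grad f(p)\|$ for every $s\in[0,a]$. Moreover $\gamma'$ is parallel along $\gamma$, hence $\gamma'(s)=P_{\gamma,0,s}\gamma'(0)$. Differentiating $\varphi$ then yields $\varphi'(s)=\langle\grad f(\gamma(s)),\gamma'(s)\rangle$, and in particular $\varphi'(0)=-\|\grad f(p)\|^{2}$.

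Next, using that $P_{\gamma,0,s}$ is an isometry with inverse (and adjoint) $P_{\gamma,s,0}$, I would rewrite
\[
\varphi'(s)-\varphi'(0)=\langle P_{\gamma,s,0}\grad f(\gamma(s))-\grad f(p),\,\gamma'(0)\rangle.
\]
By Cauchy--Schwarz, Definition~\ref{Def:GradLips} (applied to $\gamma|_{[0,s]}$, suitably reparametrized on $[0,1]$, which leaves its length unchanged), and $\ell(\gamma|_{[0,s]})=s\|\grad f(p)\|$, this produces
\[
\varphi'(s)\le -\|\grad f(p)\|^{2}+Ls\|\grad f(p)\|^{2}.
\]

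Finally, the fundamental theorem of calculus gives $\varphi(t)-\varphi(0)=\int_{0}^{t}\varphi'(s)\,ds$, and integrating the previous bound on $[0,t]$ yields
\[
f(\exp_{p}(-t\grad f(p)))\le f(p)-t\|\grad f(p)\|^{2}+\tfrac{L}{2}t^{2}\|\grad f(p)\|^{2},
\]
which after collecting the factor $t\|\grad f(p)\|^{2}$ is exactly the claimed inequality. The only delicate step is the parallel-transport bookkeeping: one must line up the direction of transport in Definition~\ref{Def:GradLips} with the one naturally appearing in $\varphi'(s)-\varphi'(0)$, which is why the adjoint/inverse identity $P_{\gamma,s,0}=P_{\gamma,0,s}^{-1}$ and the isometry property of parallel transport are invoked; everything else is a straightforward translation of the Euclidean descent lemma.
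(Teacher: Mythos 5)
Your proof is correct and is essentially the argument the paper points to: the paper omits the proof of Lemma~\ref{le:lc}, citing \cite[Corollary~2.1]{BentoFerreiraMelo2017}, which is precisely this fundamental-theorem-of-calculus argument along the geodesic $\gamma(s)=\exp_p(-s\grad f(p))$, combined with Definition~\ref{Def:GradLips}, the constancy of $\|\gamma'\|$, and the isometry of parallel transport. You are also right that convexity plays no role; the paper's remark that the proof ``follows from the definition of convexity'' is a slip---the only hypotheses actually used are the Lipschitz condition on $\grad f$ and the assumption that $\exp_p(-t\grad f(p))\in{\cal D}$ for $t\in[0,a]$, which guarantees that Definition~\ref{Def:GradLips} may be applied to the restricted segment $\gamma|_{[0,s]}$.
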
 
Note that  if ${\cal D}= {\cal M}$,  then  condition  $\exp_{p}(- t \grad f(p))\in {\cal D}$, for all $ t\in [0,a]$, in Lemma~\ref{le:lc} plays no role. In the following example we present  a functions satisfying all the assumptions of  Lemma~\ref{le:lc} for the case    ${\cal D}\neq {\cal M}$.
\begin{example}\label{ex:pdist}
Let $\mathcal{M}=\{ p\in \mathbb{R}^n:~\|p\|=1\}$ the Euclidean sphere and $q\in \mathcal{M}$. Define  $\varphi_q(p):=d^2(p,q)/2$, for all $p\in \mathcal{M}$. The function  $\varphi_q$ is  differentiable in  ${\cal D}:=\{p\in\mathcal{M}:d(p,q)< 5\pi/6 \}$ and convex in $\Omega:=\{p\in\mathcal{M}:d(p,q)\leq\pi/2 \}$. Furthermore,     $\grad \varphi_q$  is   Lipschitz continuous   on ${\cal D}$, because  ${\cal D}$ is compact and  $\mbox{hess}~ \varphi_q$ is continuous in $\mathcal{M}\backslash\{-q\}\supset{\cal D}$. Indeed,  combining \cite[Lemma 3]{FerreiraIusemNemeth2014} with  Lemma~\ref{le:CharactGL} we conclude that 
$$
L= \sup_{p\in{\cal D}} \frac{\left|\left\langle p,q\right\rangle\arccos \left\langle p,q\right\rangle\right|}{\sqrt{1-\left\langle p,q\right\rangle^2}}= \frac{5 \pi}{6} \sqrt{3}.
$$
 Since $\grad \varphi_q(p)=-\exp^{-1}_{p}q$  for all $p\in \mathcal{M}\backslash\{-q\}$, after some calculations, we conclude that $d(\exp_{p}(- t \grad \varphi_q(p)), p)\leq td(p,q)$, for all $p\in {\cal D}$. Hence, letting $p\in \Omega$ we have
$$
d((\exp_{p}(- t \grad \varphi_q(p)),q)\leq d(\exp_{p}(- t \grad \varphi_q(p)),p)+d(p,q)\leq(t+1)\frac{\pi}{2}, 
$$
and then $ \exp_{p}(- t \grad \varphi_q(p))\in {\cal D}$, for all $ t\in [0, 1/L]$. 
For more details about the  function $\varphi_q$;   see \cite{FerreiraIusemNemeth2014}.
\end{example}
The following concept will be useful in the analysis of the sequence generated by the gradient method. In fact, as we shall prove,  the sequence generated by this method satisfies the following definition. 
\begin{definition} \label{def:QuasiFejer}
A sequence $\{y_k\}$ in the complete metric space $(\mathcal{M},d)$ is quasi-Fej\'er convergent to a set $W\subset \mathcal{M}$ if, for every $w\in W$, there exist  a sequence $\{\epsilon_k\}\subset\mathbb{R}$ such that $\epsilon_k\geq 0$, $\sum_{k=1}^{\infty}\epsilon_k<+\infty$, and $d^2(y_{k+1},w)\leq d^2(y_k,w)+\epsilon_k$, for all $k=0, 1, \ldots$.
\end{definition}
The main property of a quasi-Fej\'er  sequence is stated  in the next result, and its proof is similar to the one proved in    \cite{burachik1995full},  by replacing the Euclidean distance by the Riemannian. 
\begin{theorem}\label{teo.qf}
Let $\{y_k\}$ be a sequence in the complete metric space $(\mathcal{M},d)$. If $\{y_k\}$ is quasi-Fej\'er convergent to a nonempty set $W\subset \mathcal{M}$, then $\{y_k\}$ is bounded. If furthermore, a cluster point $\bar{y}$ of $\{y_k\}$ belongs to $W$, then $\lim_{k\to\infty}y_k=\bar{y}$.
\end{theorem}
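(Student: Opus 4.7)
The plan is to mimic the standard Euclidean proof of the quasi-Fej\'er convergence theorem, replacing the Euclidean distance with the Riemannian distance $d$; the completeness of $(\mathcal{M},d)$ (noted in Section~\ref{sec:aux}) is what makes the argument go through unchanged.

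For the first assertion, I would fix an arbitrary $w\in W$ and iterate the defining inequality $d^2(y_{k+1},w)\leq d^2(y_k,w)+\epsilon_k$ to obtain, by an easy induction,
\[
d^2(y_{k+1},w)\leq d^2(y_0,w)+\sum_{j=0}^{k}\epsilon_j \leq d^2(y_0,w)+\sum_{j=0}^{\infty}\epsilon_j=:C<\infty,
\]
where the summability of $\{\epsilon_k\}$ is exactly Definition~\ref{def:QuasiFejer}. Hence every $y_k$ lies in the closed metric ball $B[w,\sqrt{C+d^2(y_0,w)}]$, which shows that $\{y_k\}$ is bounded in $(\mathcal{M},d)$.

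For the second assertion, assume that $\bar{y}\in W$ is a cluster point of $\{y_k\}$. Given $\varepsilon>0$, I would first use $\sum \epsilon_k<\infty$ to pick $N$ so large that $\sum_{j=N}^{\infty}\epsilon_j<\varepsilon/2$, and then use the cluster property of $\bar{y}$ to choose an index $k_0\geq N$ with $d^2(y_{k_0},\bar{y})<\varepsilon/2$. Applying the quasi-Fej\'er inequality to $w=\bar{y}\in W$ and iterating from $k_0$ to any $k>k_0$ gives
\[
d^2(y_k,\bar{y})\leq d^2(y_{k_0},\bar{y})+\sum_{j=k_0}^{k-1}\epsilon_j<\frac{\varepsilon}{2}+\frac{\varepsilon}{2}=\varepsilon,
\]
so $y_k\to \bar{y}$ as $k\to\infty$, as required.

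There is no real obstacle in this argument beyond the usual bookkeeping: the only Riemannian input is that $d$ is a genuine metric on $\mathcal{M}$ (so the triangle-style estimates on $d^2$ are not needed, one works directly with the hypothesis) and that the resulting closed ball is a well-defined subset of the complete metric space $(\mathcal{M},d)$. Since the argument never uses a vector space structure, linearity, or any curvature bound, the Euclidean proof in~\cite{burachik1995full} transcribes verbatim with $d$ in place of the Euclidean norm, which is why only a pointer to that reference is needed.
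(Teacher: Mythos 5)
Your argument is correct and is exactly the route the paper intends: it just transcribes the classical quasi-Fej\'er argument of \cite{burachik1995full} with the Riemannian distance $d$ in place of the Euclidean one, iterating the inequality of Definition~\ref{def:QuasiFejer} for boundedness and combining the tail of $\sum_k\epsilon_k$ with the cluster-point property (applied to $w=\bar{y}\in W$) for full convergence. The only cosmetic slip is the radius $\sqrt{C+d^2(y_0,w)}$, which double-counts $d^2(y_0,w)$ already contained in $C$; the bound is still valid, so nothing is lost.
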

The study of the  gradient method for convex functions  is well understood for Riemannian manifold with nonnegative sectional curvature and Hadamard manifolds; see \cite{da1998geodesic, ZhangReddiSra2016, ZhangSra2016}. In order to  increase the domain of applications of the method,  {\it hereafter, we assume that $\mathcal{M}$  is a   complete Riemannian manifolds with sectional curvature $K\geq\kappa$,  where $\kappa<0$}, unless the contrary is explicitly stated.  
\section{The Riemannian gradient method}\label{sec:Gradient} 
In this section we state the    Riemannian gradient method   to solve \eqref{eq:OptP} and the strategies for  choosing  the stepsize that will be used in our analysis. 

Let  $f:  \mathcal{D} \to\mathbb{R}$  be   differentiable, $\mathcal{D}\subset \mathcal{M}$ be an open set, $\Omega^*$  be the {\it solution set} of   the problem \eqref{eq:OptP}, $f^*\coloneqq \inf_{x\in \mathcal{D}}f(x)$   be the  {\it optimum value}  of $f$,  and $c\in   \mathbb{R}$. {\it From now on, we assume that  $\Omega^*$ is non-empty  and  $f$   is convex on the sub-level set  ${\cal L}_{c}f$}, where 
$$
{\cal L}_{c}f:=\{p\in  \mathcal{M}:~ f(p)\leq c\} \subset \mathcal{D}.
$$
The  statement  of    {\it Riemannian  gradient algorithm} to solve the problem \eqref{eq:OptP} is as follows.\\

\hrule
\begin{algorithm} \label{sec:gradient}
{\bf  Gradient algorithm in a Riemanian manifold  $\mathcal{M}$\\}
\hrule
\begin{description}
\item[ Step 0.] Let $ p_0\in{\cal L}_{c}f$. Set $k=0$.
\item[ Step 1.] If $\mbox{grad} f(p_k)=0$, then {\bf stop}; otherwise,  choose a stepsize $t_k>0$ and compute
\begin{equation} \label{eq:GradMethod}
p_{k+1}\coloneqq \mbox{exp} _{p_{k}}\left(-t_{k}\,\mbox{grad} f(p_{k})\right).
\end{equation}
\item[ Step 2.]  Set $k\leftarrow k+1$ and proceed to  \textbf{Step~1}.
\end{description}
\hrule
\end{algorithm}
In the following we present three  different strategies for  choosing  the stepsize $t_k>0$ in Algorithm~\ref{sec:gradient}.  In the first  strategy we assume that  $\grad f$  is Lipschitz continuous.
\begin{strategy}[Lipschitz stepsize]\label{fixed.step} 
Assume that  $\grad f$  is   Lipschitz continuous   on $\mathcal{D}$ with   constant $L\geq 0$  and  that $ \exp_{p}(- t \grad f(p))\in \mathcal{D}$, for all $p\in {\cal L}_{c}f$ and $ t\in [0,1/L]$. Let $\varepsilon>0$ and take
\begin{equation}\label{eq:fixed.step}
\varepsilon< t_k\leq  ~ \frac{1}{L}.
\end{equation}
\end{strategy}
\begin{remark}
If ${\cal D}= {\cal M}$,  then condition  $\exp_{p}(- t \grad f(p))\in {\cal D}$, for all $ t\in [0,a]$, in Strategy~\ref{fixed.step}   plays no role.  Recall that the  function in  Example~\ref{ex:pdist}  satisfies this condition  for     ${\cal D}\neq {\cal M}$.
\end{remark}
Despite knowing that  $\grad f$ is Lipschitz continuous, in  general,  the Lipschitz  constant is not  computable. Next  strategy can be used to compute the stepsize   without any Lipschitz condition.  However, as we shall  show,  if  $\grad f$ is Lipschitz  with constant $L>0$ the   stepsize  computed is an approximation to the  stepsize  $1/L$; see  \cite{BeckTeboulle2009}. 
\begin{strategy}[adaptive stepsize]\label{adaptive.step}
Take $\beta\in(0,1)$, $L_0>0$, and $\eta>1$. Set   $t_k\coloneqq L_{k}^{-1} $, where   $L_{k}\coloneqq \eta^{i_k}L_{k-1}$ and 
\begin{equation} \label{eq:GradMethodcpl}
i_k\coloneqq \min \left\{ i~:~f({\gamma_k}(\tau_i))\leq f(p_k)-  \beta \tau_i \left\|\grad f(p_k)\right\|^2,~ i=0,1, \ldots\right\}, 
\end{equation}
where $\tau_i:=(\eta^{i}L_{k-1})^{-1}$  and  ${\gamma_k}(\tau_i)\coloneqq \mbox{exp} _{p_k}\left(-\tau_i\ \grad f(p_k)\right)$.
\end{strategy}

\begin{strategy}[Armijo's stepsize]\label{armijo.step}
Choose $\beta\in(0,1)$ and   take
\begin{equation}\label{step.arm}
t_k\coloneqq \max \left\{ 2^{-i}: ~f\left(\gamma_k (2^{-i})\right)\leq f(p_k)-\beta 2^{-i} \left\|\grad f(p_k)\right\|^2,~ i=0,1, \ldots\right\}, 
\end{equation}
where ${\gamma_k}(2^{-i})\coloneqq \mbox{exp} _{p_k}\left(-2^{-i}\ \grad f(p_k)\right)$.
\end{strategy}
\begin{remark} \label{rem:strategy3}
Strategy~\emph{\ref{adaptive.step}} can be seen as an Armijo-type line search where the first trial stepsize at iteration $k$ is set to be equal to $t_{k-1}$. Indeed, taking $L_0=1$, and $\eta=2$ the inequality in \eqref{eq:GradMethodcpl} can be equivalently rewritten as
$$
f({\gamma_k}( 2^{-i}t_{k-1}))\leq f(p_k)- \beta 2^{-i}t_{k-1} \left\|\grad f(p_k)\right\|^2.
$$
\end{remark}
The proof of the  well-definedness  of  Strategies~\ref{adaptive.step} and ~\ref{armijo.step}    follows the usual arguments  and  will be omitted. On the other hand, \eqref{eq:fixed.step} and  Lemma~\ref{le:lc} imply that, for each $p\in{\cal L}_{c}f$ there holds $ \exp_{p}(- t \grad f(p))\in {\cal L}_{c}f$, for all $ t\in [0,1/L]$. Hence, the sequence $\{p_k\}$ generated by Algorithm~\ref{sec:gradient} with  Strategies~\ref{fixed.step}, ~\ref{adaptive.step} or ~\ref{armijo.step} is well-defined.  Finally we remark that, due to  $f$  be  convex,  $\grad f(p)=0$ if only if   $p\in \Omega^*$.  Therefore, {\it from now on   we assume that  $\grad f(p_k)\neq0$, or equivalently,    $p_k\notin\Omega^*$, for  all $k=0,1,\ldots$.}
\subsection{Asymptotic convergence Analysis } \label{sec.main}
 In this section our goal is to prove that the sequence $\{p_k\}$,   generated by the gradient method  with  Strategies~\ref{fixed.step}, \ref{adaptive.step} or  \ref{armijo.step},   converges to a solution of problem \eqref{eq:OptP}. 
\begin{lemma}\label{lem:boud}
Let $\{p_k\}$ be    generated by  Algorithm~\ref{sec:gradient}  with Strategies~\ref{fixed.step}, \ref{adaptive.step} or  \ref{armijo.step}. Then, 
	\begin{equation}\label{desi.des}
	f(p_{k+1})\leq f(p_k)- \nu  t_k\left\|\grad f(p_k)\right\|^2, \qquad k=0,1, \ldots, 
	\end{equation}
where $\nu=1/2$ for Strategy~\emph{\ref{fixed.step}},  and $\nu=\beta$ for Strategies~\emph{\ref{adaptive.step}} and ~\emph{\ref{armijo.step}}. 	Consequently, 	$\{f(p_k)\}$ is non-increasing sequence  and $\lim_{k\to +\infty}t_k\|\grad f(p_k)\|^2=0$.
\end{lemma}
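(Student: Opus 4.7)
The plan is to prove the three claims in sequence: first the sufficient-decrease inequality \eqref{desi.des}, then monotonicity of $\{f(p_k)\}$, and finally the limit statement, treating the three stepsize rules separately only where needed.

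First I would handle Strategy~\ref{fixed.step}. By hypothesis $\grad f$ is Lipschitz on $\mathcal{D}$ with constant $L$, and the iterates remain in $\mathcal{L}_c f\subset\mathcal{D}$ along the exponential curve for $t\in[0,1/L]$. Since $t_k\le 1/L$, I would invoke Lemma~\ref{le:lc} with $a=1/L$ at the point $p_k$ and evaluate it at $t=t_k$ to get
\[
f(p_{k+1})=f\bigl(\exp_{p_k}(-t_k\grad f(p_k))\bigr)\le f(p_k)-\Bigl(1-\tfrac{L}{2}t_k\Bigr)t_k\,\|\grad f(p_k)\|^2.
\]
Using $t_k\le 1/L$ bounds the bracket below by $1/2$, yielding \eqref{desi.des} with $\nu=1/2$.

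For Strategies~\ref{adaptive.step} and~\ref{armijo.step}, the inequality \eqref{desi.des} with $\nu=\beta$ is essentially tautological: it is exactly the acceptance criterion defining the stepsize $t_k$ (cf.\ \eqref{eq:GradMethodcpl} and \eqref{step.arm}). The only thing that needs to be noted is that the well-definedness of these two strategies, already asserted in the text preceding the lemma, guarantees such a $t_k>0$ exists, so \eqref{desi.des} holds at every iteration. This disposes of the first claim in all three cases.

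Finally I would derive the two consequences. Since $t_k>0$ and, by the standing assumption, $\grad f(p_k)\ne 0$, the sufficient-decrease inequality immediately gives $f(p_{k+1})<f(p_k)$, so $\{f(p_k)\}$ is strictly decreasing, in particular non-increasing. To obtain the limit, I would observe that $\Omega^*\ne\emptyset$ and the convexity of $f$ on $\mathcal{L}_c f$ imply $f(p_k)\ge f^\ast>-\infty$ for every $k$ (since $p_k\in\mathcal{L}_c f$ and $f^\ast$ is attained). Hence $\{f(p_k)\}$ is monotone and bounded below, so it converges, which forces $f(p_k)-f(p_{k+1})\to 0$. Rearranging \eqref{desi.des} as
\[
\nu\,t_k\|\grad f(p_k)\|^2\le f(p_k)-f(p_{k+1})
\]
and taking $k\to\infty$ yields $\lim_{k\to\infty} t_k\|\grad f(p_k)\|^2=0$.

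None of the steps presents a serious obstacle: the only subtle point is ensuring that the iterates stay in the domain $\mathcal{D}$ for Strategy~\ref{fixed.step} so that Lemma~\ref{le:lc} applies, but this is precisely the hypothesis built into the strategy together with the inductive use of \eqref{desi.des} to keep $p_k\in\mathcal{L}_c f$. For the other two strategies, the argument is purely mechanical once well-definedness is in hand.
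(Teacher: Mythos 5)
Your proposal is correct and follows essentially the same route as the paper: Lemma~\ref{le:lc} with $t_k\le 1/L$ for Strategy~\ref{fixed.step}, the acceptance criteria \eqref{eq:GradMethodcpl} and \eqref{step.arm} for the other two strategies, and boundedness of $f$ below by $f^*$ for the limit. The only (immaterial) difference is at the end: the paper telescopes \eqref{desi.des} to bound the partial sums of $\sum_k t_k\|\grad f(p_k)\|^2$ and concludes the terms vanish, whereas you use monotone convergence of $\{f(p_k)\}$ to get $f(p_k)-f(p_{k+1})\to 0$ and then sandwich; both are equivalent one-line arguments.
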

\begin{proof}
 For  Strategies~\mbox{\ref{adaptive.step}} and {\ref{armijo.step}}, inequality \eqref{desi.des}   follows directly from  \eqref{eq:GradMethodcpl} and \eqref{step.arm}, respectively. Now, we assume that  $\{p_k\}$ is   generated by using Strategy~{\ref{fixed.step}}. In  this case,  Lemma~\ref{le:lc} implies  that 
$$
f(p_{k+1})=f(\mbox{exp} _{p_{k}}\left(-t_{k}\,\mbox{grad} f(p_{k})\right))\leq f(p_k)-\left(1-\frac{{L}}{2}t_k\right)t_k\left\|\grad f(p_k)\right\|^2,  
$$
for all $ k=0,1, \ldots$. Hence, taking into account  \eqref{eq:fixed.step} we  have $1/2\leq (1-Lt_k/2)$ and then,\eqref{desi.des} follows.  Therefore,  \eqref{desi.des} holds for $\{p_k\}$   generated by using the three  strategies. It  is immediate   from \eqref{desi.des} that $\{f(p_k)\}$ is non-increasing. Moreover,  \eqref{desi.des} implies that
$$
\sum_{k=0}^{\ell}t_k\left\|\grad f(p_k)\right\|^2\leq \frac{1}{\nu} \sum_{k=0}^{\ell}f(p_k)-f(p_{k+1})\leq \frac{1}{\nu} \left(f(p_0)-f^*\right), 
$$
for each  nonnegative integer $\ell$,  which implies that $t_k\left\|\grad f(p_k)\right\|^2$ goes to zero,  as $k$ goes to infinity,  completing the proof. 
\end{proof}
\begin{remark} \label{re:apxLipConst}
Whenever  $\grad f$  is   Lipschitz continuous   on $\mathcal{D}$ with   constant $L\geq 0$, the stepsize in Strategy~\ref{adaptive.step} can be seen as  an approximation for the Lipschitz constant. Indeed,  since $L_0>0$ and  $\eta>1$ in Strategy~\ref{adaptive.step}, we conclude that $t_k\coloneqq L_{k}^{-1}\leq L_{k-1}^{-1}=t_{k-1}$, for all $k=0,1, \ldots$. Thus  $t_k\leq 1/L_0$, for all $k=0,1, \ldots$.  If $ L_0\geq L$,  then it follows from  \eqref{desi.des} that $t_k\leq  1/L_0$, for all $k=0,1, \ldots$.  Now assume  that  $L_0\leq L$. In this case,  \eqref{desi.des}  holds for $t_k=1/L$ and then  \eqref{eq:GradMethodcpl} implies  that
$1/(\eta L)\leq t_{k}$.   Therefore,    
\begin{equation}\label{des.arm.aux}
\frac{1}{\eta L}\leq t_k\leq \frac{1}{L_0}, \qquad k=0,1, \ldots
\end{equation}
\end{remark}
Let $ p_0\in \mathcal{M}$.  By Lemma~\ref{lem:boud}, we define   constant $ \rho >0$ as follows
\begin{equation} \label{eq:rho}
\sum_{k=0}^{\infty}t^2_k\left\|\grad f(p_k)\right\|^2\leq \rho :=
\begin{cases}
2[f(p_0)-f^*]/L,      \qquad  \qquad \mbox{ for  Strategy~\mbox{\ref{fixed.step}}};\\
[f(p_0)-f^*]/(\beta L_0),   \qquad \quad  \mbox{for Strategy~\mbox{\ref{adaptive.step}}};\\
[f(p_0)-f^*]/\beta, \quad   \qquad \qquad \mbox{for  Strategy~\mbox{\ref{armijo.step}}}.
\end{cases}
\end{equation}
In the following result, in particular,  we bound the sequence $\{p_k\}$ generated by  Algorithm~\ref{sec:gradient} with Strategies~\ref{fixed.step}, ~\ref{adaptive.step} or ~\ref{armijo.step}.
\begin{lemma}\label{lem:bounded}
Let $q\in\Omega^*$ and $\{p_k\}$ the sequence  generated by  Algorithm~\ref{sec:gradient} with Strategies~\ref{fixed.step}, \ref{adaptive.step} or  \ref{armijo.step}. Then there holds 
\begin{equation}\label{def:dqkapa}
d(p_{k+1},q)\leq \frac{1}{\sqrt{\kappa}}\cosh^{-1}\left(\cosh(\sqrt{\kappa}d(p_0,q))e^{\frac{1}{2}\left(\sqrt{\kappa\rho}\right)\sinh\left(\sqrt{\kappa\rho}\right)}\right), \qquad  k=0, 1, \ldots.
\end{equation}
\end{lemma}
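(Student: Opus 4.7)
The plan is to apply Lemma~\ref{lemli} with $p=p_k$, $t=t_k$, and $q\in\Omega^*$ (noting that $\gamma(t_k)=p_{k+1}$), then iterate the resulting one-step inequality and bound the accumulated error using the summability bound \eqref{eq:rho}. Throughout I read $\sqrt{\kappa}$ in the statement as $\hat{\kappa}=\sqrt{|\kappa|}$ in accordance with \eqref{eq:KappaHat}.

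First I would invoke the first inequality of Lemma~\ref{lemli}. Since $q\in\Omega^*$ we have $f(p_k)-f(q)=f(p_k)-f^*\geq 0$, and since $\tanh(\hat{\kappa} d(p_k,q))/(\hat{\kappa} d(p_k,q))\geq 0$, the second term inside the brackets is nonnegative and can be discarded. Setting $a_k:=\hat{\kappa}\,t_k\|\grad f(p_k)\|$, this yields the clean multiplicative recursion
\begin{equation*}
\cosh(\hat{\kappa}\,d(p_{k+1},q))\;\leq\;\cosh(\hat{\kappa}\,d(p_k,q))\left[1+\tfrac{1}{2}a_k\sinh(a_k)\right].
\end{equation*}
Applying the elementary inequality $1+x\leq e^{x}$ with $x=\tfrac{1}{2}a_k\sinh(a_k)\geq 0$ and iterating from $k$ down to $0$ gives
\begin{equation*}
\cosh(\hat{\kappa}\,d(p_{k+1},q))\;\leq\;\cosh(\hat{\kappa}\,d(p_0,q))\exp\!\left(\tfrac{1}{2}\sum_{j=0}^{k}a_j\sinh(a_j)\right).
\end{equation*}

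The substantive step is to bound the series $\sum_j a_j\sinh(a_j)$ uniformly in $k$. Here I would use \eqref{eq:rho}, which gives $\sum_j a_j^2=|\kappa|\sum_j t_j^2\|\grad f(p_j)\|^2\leq |\kappa|\rho$; in particular every $a_j$ is bounded by $A:=\hat{\kappa}\sqrt{\rho}$. Since $x\mapsto \sinh(x)/x$ is nondecreasing on $(0,\infty)$, we have $\sinh(a_j)/a_j\leq \sinh(A)/A$, hence $a_j\sinh(a_j)\leq a_j^2\,\sinh(A)/A$. Summing,
\begin{equation*}
\sum_{j=0}^{k}a_j\sinh(a_j)\;\leq\;\frac{\sinh(A)}{A}\sum_{j=0}^{k}a_j^2\;\leq\;\frac{\sinh(A)}{A}\,|\kappa|\rho\;=\;A\sinh(A)\;=\;\hat{\kappa}\sqrt{\rho}\,\sinh(\hat{\kappa}\sqrt{\rho}).
\end{equation*}

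Plugging this bound into the iterated inequality and taking $\cosh^{-1}$ (valid because $\cosh$ is strictly increasing on $[0,\infty)$ and the right-hand side is $\geq 1$) finally yields
\begin{equation*}
d(p_{k+1},q)\;\leq\;\frac{1}{\hat{\kappa}}\cosh^{-1}\!\left(\cosh(\hat{\kappa}\,d(p_0,q))\exp\!\left(\tfrac{1}{2}\hat{\kappa}\sqrt{\rho}\,\sinh(\hat{\kappa}\sqrt{\rho})\right)\right),
\end{equation*}
which is \eqref{def:dqkapa}. The only delicate point is the last display of the error bound: the essential observation is that the monotonicity of $\sinh(x)/x$ lets one replace the nonlinear $\sinh(a_j)$ by a linear factor times $a_j$, after which summability of $\{a_j^2\}$ via \eqref{eq:rho} closes the argument. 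The rest is a straightforward application of Lemma~\ref{lemli} and the Fej\'er-type bookkeeping that naturally arises once the nonnegative ``functional decrease'' term is dropped.
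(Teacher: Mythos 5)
Your proposal is correct and follows essentially the same route as the paper: apply the first inequality of Lemma~\ref{lemli} with $p=p_k$, $t=t_k$, drop the nonnegative term coming from $f(p_k)-f(q)\geq 0$, use $1+x\leq e^x$ together with the monotonicity of $\sinh(x)/x$ and the summability bound \eqref{eq:rho}, and invert $\cosh$. The only (immaterial) difference is ordering: the paper replaces $\sinh(a_k)$ by the linear bound before exponentiating and summing, whereas you exponentiate first and bound the series afterwards.
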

\begin{proof}
Applying  the first inequality of Lemma~\ref{lemli},  with $t=t_k$ and   $p=p_k$ , we have $p_{k+1}=\gamma(t_k)$,  and  taking into account that  $q\in\Omega^*$,   we  conclude that   
$$
\cosh(\hat{\kappa}d(p_{k+1},q))\leq \cosh(\hat{\kappa}d(p_k,q))\left[1+\left( \hat{\kappa} t_k \left\|\grad f(p_k)\right\|\right)^2\frac{\sinh( \hat{\kappa} t_k\left\|\grad f(p_k)\right\|)}{2 \hat{\kappa}t_k\left\|\grad f(p_k)\right\|}\right], 
$$
for all $k=0, 1, \ldots$, where $\hat{\kappa}$ is defined in \eqref{eq:KappaHat}. Since  \eqref{eq:rho} implies  $ t_k \left\|\grad f(p_k)\right\|\leq \sqrt{\rho}$, for all $k=0, 1, \ldots$,  and   the map  $ (0, +\infty) \ni t \mapsto  \sinh(t)/t$ is increasing, we conclude that 
$$\cosh(\hat{\kappa}d(p_{k+1},q))\leq \cosh(\hat{\kappa}d(p_k,q))\left[1+a\left( t_k \left\|\grad f(p_k)\right\|\right)^2\right], \qquad k=0, 1, \ldots,$$
where  $a:= \hat{\kappa}(\sinh(\hat{\kappa}\sqrt{\rho}))/(2\sqrt{\rho})$.  Now note  that  the last inequality implies that 
$$\cosh(\hat{\kappa}d(p_{k+1},q))\leq \cosh(\hat{\kappa}d(p_k,q))e^{a\left( t_k \left\|\grad f(p_k)\right\|\right)^2}, \qquad k=0, 1, \ldots,$$
Therefore, by   using \eqref{eq:rho},  it follows that  $\cosh (\hat{\kappa}d(p_{k+1},q))\leq\cosh(\hat{\kappa}d(p_0,q))e^{a\rho}$, which is equivalent to \eqref{def:dqkapa} by considering the definition of $\hat{\kappa}$ in \eqref{eq:KappaHat}.
\end{proof}
Let us define   the following auxiliary constant   
\begin{equation} \label{eq:qkappa}
{\cal C}_{\rho,\kappa}^q  := \frac{\sinh\left(\sqrt{\kappa\rho}\right)}{\sqrt{\kappa\rho}} \left[1+\cosh^{-1}\left(\cosh(\sqrt{\kappa}d(p_0,q))e^{\frac{1}{2}\left(\sqrt{\kappa\rho}\right)\sinh\left(\sqrt{\kappa\rho}\right)}\right)\right],
\end{equation}
where $\rho$ in  defined in  \eqref{eq:rho}.
\begin{lemma}\label{pr:ltd}
 Let $\{p_k\}$ be    generated by by  Algorithm~\ref{sec:gradient} with Strategies~\ref{fixed.step}, \ref{adaptive.step} or  \ref{armijo.step}. Then,  for each  $q\in\Omega^*$,  there holds
\begin{equation}\label{eq;desgen}
d^2(p_{k+1},q)\leq d^2(p_k,q) +  \frac{t_k}{\nu}{\cal C}_{\rho,\kappa}^q\left[f(p_k)-f(p_{k+1})\right]+2t_k\left[f(q)-f(p_k)\right], 
\end{equation}
for all  $k=0,1,\ldots$, where $\nu=1/2$ for Strategy~\emph{\ref{fixed.step}} and $\nu=\beta$ for Strategies~\emph{\ref{adaptive.step}} and ~\emph{\ref{armijo.step}}.
\end{lemma}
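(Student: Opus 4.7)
The plan is to invoke the second inequality of Lemma~\ref{lemli} with $p=p_k$, $t=t_k$, and $\gamma(t_k)=p_{k+1}$ to obtain
\begin{equation*}
d^2(p_{k+1},q)\le d^2(p_k,q)+\frac{\sinh(\hat{\kappa}t_k\|\grad f(p_k)\|)}{\hat{\kappa}}\!\left[t_k\|\grad f(p_k)\|\frac{\hat{\kappa}d(p_k,q)}{\tanh(\hat{\kappa}d(p_k,q))}-\frac{2(f(p_k)-f(q))}{\|\grad f(p_k)\|}\right]\!,
\end{equation*}
and then to massage the two summands inside the bracket separately so that the first becomes the $\mathcal{C}_{\rho,\kappa}^q$--term and the second becomes $2t_k[f(q)-f(p_k)]$.

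For the first summand I would factor $(t_k\|\grad f(p_k)\|)^2$ out and write the coefficient as $[\sinh(\hat\kappa t_k\|\grad f(p_k)\|)/(\hat\kappa t_k\|\grad f(p_k)\|)]\cdot[\hat\kappa d(p_k,q)/\tanh(\hat\kappa d(p_k,q))]$. Since $x\mapsto\sinh(x)/x$ is increasing on $(0,\infty)$ and, by \eqref{eq:rho}, $\hat\kappa t_k\|\grad f(p_k)\|\le\hat\kappa\sqrt{\rho}$, the first factor is at most $\sinh(\hat\kappa\sqrt{\rho})/(\hat\kappa\sqrt{\rho})$. For the second factor I use the elementary inequality $x/\tanh(x)\le 1+x$ for $x\ge 0$, combined with Lemma~\ref{lem:bounded} applied at index $k$, which gives $\hat\kappa d(p_k,q)\le \cosh^{-1}\!\bigl(\cosh(\hat\kappa d(p_0,q))e^{\frac12(\hat\kappa\sqrt\rho)\sinh(\hat\kappa\sqrt\rho)}\bigr)$. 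These two bounds multiply out to exactly $\mathcal{C}_{\rho,\kappa}^q$ defined in \eqref{eq:qkappa}. Finally, by Lemma~\ref{lem:boud},
\begin{equation*}
t_k^2\|\grad f(p_k)\|^2=t_k\cdot t_k\|\grad f(p_k)\|^2\le \frac{t_k}{\nu}\bigl[f(p_k)-f(p_{k+1})\bigr],
\end{equation*}
which turns the remaining $(t_k\|\grad f(p_k)\|)^2$ into $(t_k/\nu)[f(p_k)-f(p_{k+1})]$, yielding the desired first term $(t_k/\nu)\mathcal{C}_{\rho,\kappa}^q[f(p_k)-f(p_{k+1})]$.

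For the second summand, I rewrite the coefficient as $2t_k\cdot\sinh(\hat\kappa t_k\|\grad f(p_k)\|)/(\hat\kappa t_k\|\grad f(p_k)\|)$; because $\sinh(x)/x\ge 1$ and $f(p_k)-f(q)\ge 0$ (since $q\in\Omega^*$), the inequality $-[\sinh(x)/x](f(p_k)-f(q))\le -(f(p_k)-f(q))=f(q)-f(p_k)$ produces the bound $2t_k[f(q)-f(p_k)]$. Adding the two bounds gives \eqref{eq;desgen}.

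The main obstacle is the auxiliary estimate $x/\tanh(x)\le 1+x$ for $x\ge 0$, which is the hinge that converts the $\tanh$--term from Lemma~\ref{lemli} into the $1+(\,\cdot\,)$ structure appearing in $\mathcal{C}_{\rho,\kappa}^q$; I would verify it by checking that $h(x):=(1+x)\tanh(x)-x$ satisfies $h(0)=h'(0)=0$ and $h''(x)\ge 0$ on the relevant range, or more directly by noting $\sinh(x)\ge x\cosh(x)/(1+x)$, which after cross-multiplying is equivalent to the claim. Everything else is bookkeeping on constants and a careful use of the monotonicity of $\sinh(x)/x$ together with the a priori bounds $\eqref{eq:rho}$ and \eqref{def:dqkapa}.
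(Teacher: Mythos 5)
Your argument is correct and is essentially the paper's own proof: apply the second inequality of Lemma~\ref{lemli} along $\gamma_k$, bound $\sinh(\hat\kappa t_k\|\grad f(p_k)\|)/(\hat\kappa t_k\|\grad f(p_k)\|)$ by $\sinh(\hat\kappa\sqrt{\rho})/(\hat\kappa\sqrt{\rho})$ via \eqref{eq:rho}, use $x/\tanh(x)\le 1+x$ together with Lemma~\ref{lem:bounded} to produce ${\cal C}_{\rho,\kappa}^q$, convert $t_k^2\|\grad f(p_k)\|^2$ through Lemma~\ref{lem:boud}, and use $\sinh(x)/x\ge 1$ with $f(p_k)-f(q)\ge 0$ for the remaining term. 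Two small points only: the convexity check you propose for $x/\tanh(x)\le 1+x$ does not go through as stated, since $h''(x)=2\,\mathrm{sech}^2(x)\left[1-(1+x)\tanh(x)\right]$ changes sign, but the inequality follows directly from $(1+x)\sinh(x)-x\cosh(x)=\sinh(x)-x\,e^{-x}\ge 0$; and the bound of Lemma~\ref{lem:bounded} is stated for $d(p_{k+1},q)$, so when you invoke it for $d(p_k,q)$ you should note that the case $k=0$ holds trivially because the right-hand side of \eqref{def:dqkapa} is at least $d(p_0,q)$.
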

\begin{proof}
Define $\gamma _{k} (t)=\mbox{exp} _{p_{k}}\left(-t\,\grad f(p_k)\right)$,  for all $t\in [0, +\infty)$. Then,   $\gamma_{k}(0)=p_k$ and, from \eqref{eq:GradMethod}, we obtain  $\gamma_{k}(t_k)=p_{k+1}$. Applying second inequality of  Lemma~\ref{lemli} with $\gamma=\gamma_k$, after some manipulations,  we conclude  that 
\begin{multline} \label{eq:GenIneq}
d^2(p_{k+1},q)\leq d^2(p_k,q) +\\
 \frac{\sinh\left(\hat{\kappa}t_k\|\grad f(p_k)\|\right)}{\hat{\kappa}t_k\|\grad f(p_k)\|}\left[t_k^2\|\grad f(p_k)\|^2\,\frac{\hat{\kappa}d(p_k,q)}{\tanh\left(\hat{\kappa}d(p_k,q)\right)}+
2t_k\left[f(q)-f(p_k)\right]\right], 
\end{multline}
for all $k=0,1,\ldots$.   On the other hand,     $t/\tanh(t)\leq 1+t$, for all $t\geq 0$, and the map $ (0, +\infty) \ni t \mapsto  \sinh(t)/t$ is increasing and bounded below by $1$. Thus, taking into account  that  \eqref{eq:rho} implies $ t_k \left\|\grad f(p_k)\right\|\leq \sqrt{\rho}$ for all $k=0, 1, \ldots$, and considering   $f(q)-f(p_k)\leq 0$ for all $k=0,1, \ldots$, we conclude from  \eqref{eq:GenIneq} that 
$$
d^2(p_{k+1},q)\leq d^2(p_k,q) + \frac{\sinh\left(\hat{\kappa}\sqrt{\rho}\right)}{\hat{\kappa}\sqrt{\rho}} t^2_k\|\grad f(p_k)\|^2\left[1+ \hat{\kappa} d(p_k,q)\right]+ 2t_k\,[f(q)-f(p_k)], 
$$
for all $k=0,1,\ldots$, where $\rho$ is defined in \eqref{eq:rho}.  Thus,   by Lemma~\ref{lem:boud},   we obtain  
 $$
d^2(p_{k+1},q)\leq d^2(p_k,q) + \frac{t_k}{\nu}\frac{\sinh\left(\hat{\kappa}\sqrt{\rho}\right)}{\hat{\kappa}\sqrt{\rho}}\left[1+ \hat{\kappa} d(p_k,q)\right][f(p_k)-f(p_{k+1})]+ 2t_k\,[f(q)-f(p_k)], 
$$
 for all $k=0,1,\ldots$. Therefore,  by Lemma~\ref{lem:bounded} and \eqref{eq:qkappa},  we have   \eqref{eq;desgen},  which concludes the proof.
\end{proof}
Finally we are ready to prove the full convergence of  $\{p_k\}$ to a minimizer of $f$.
\begin{theorem}\label{teo.main}
 Let $\{p_k\}$ be    generated by  by  Algorithm~\ref{sec:gradient} with Strategies~\ref{fixed.step}, \ref{adaptive.step} or  \ref{armijo.step}.  Then $\{p_k\}$ converges  to a solution of the problem in \eqref{eq:OptP}. 
\end{theorem}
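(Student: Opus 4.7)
The strategy is to show that $\{p_k\}$ is quasi-Fej\'er convergent to the solution set $\Omega^{*}$, extract a cluster point, show that cluster point lies in $\Omega^{*}$, and then invoke Theorem~\ref{teo.qf}.

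First I would establish quasi-Fej\'er convergence using Lemma~\ref{pr:ltd}. Fix $q\in\Omega^{*}$. Since $f(q)\leq f(p_k)$, the last term in \eqref{eq;desgen} is non-positive and can be discarded, yielding
$$
d^{2}(p_{k+1},q)\leq d^{2}(p_{k},q)+\epsilon_{k},\qquad \epsilon_{k}:=\frac{t_{k}}{\nu}\,{\cal C}^{q}_{\rho,\kappa}\bigl[f(p_{k})-f(p_{k+1})\bigr].
$$
Each strategy gives a uniform upper bound $T$ on $t_k$ ($T=1/L$ for Strategy~\ref{fixed.step}, $T=1/L_{0}$ for Strategy~\ref{adaptive.step}, and $T=1$ for Strategy~\ref{armijo.step}). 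Combining this with the non-increasing property of $\{f(p_k)\}$ from Lemma~\ref{lem:boud}, the series telescopes:
$$
\sum_{k=0}^{\infty}\epsilon_{k}\leq \frac{T}{\nu}{\cal C}^{q}_{\rho,\kappa}\bigl(f(p_{0})-f^{*}\bigr)<+\infty.
$$
Hence $\{p_k\}$ satisfies Definition~\ref{def:QuasiFejer} with respect to $\Omega^{*}$. By Theorem~\ref{teo.qf} the sequence is bounded, and since $({\cal M},d)$ is complete with closed bounded sets compact, there exist a subsequence $\{p_{k_j}\}$ and a point $\bar p\in{\cal M}$ with $p_{k_j}\to\bar p$.

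The crux of the argument, which I expect to be the main obstacle, is showing $\bar p\in\Omega^{*}$. By Lemma~\ref{lem:boud} we have $t_{k}\|\grad f(p_{k})\|^{2}\to 0$. For Strategy~\ref{fixed.step} this is immediate: $t_k>\varepsilon$ forces $\|\grad f(p_{k_j})\|\to 0$, and continuity of $\grad f$ gives $\grad f(\bar p)=0$, which by convexity yields $\bar p\in\Omega^{*}$. For Strategies~\ref{adaptive.step} and~\ref{armijo.step} the stepsize may vanish, so I proceed by contradiction: assume $\grad f(\bar p)\neq 0$. Continuity then gives $\|\grad f(p_{k_j})\|\geq c>0$ for large $j$, forcing $t_{k_j}\to 0$. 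By the minimality defining the stepsize, the previous trial in the backtracking (namely $\eta\, t_{k_j}$ for Strategy~\ref{adaptive.step}, or $2t_{k_j}$ for Strategy~\ref{armijo.step}) violates the sufficient decrease condition. Writing $\gamma_{k_j}(s)=\exp_{p_{k_j}}(-s\,\grad f(p_{k_j}))$ and dividing the violated inequality by the positive trial stepsize, one obtains
$$
\frac{f(\gamma_{k_j}(s_{k_j}))-f(p_{k_j})}{s_{k_j}}>-\beta\|\grad f(p_{k_j})\|^{2},
$$
where $s_{k_j}\to 0$. Passing to the limit and using the chain rule $\frac{d}{ds}f(\gamma_{k_j}(s))\big|_{s=0}=-\|\grad f(p_{k_j})\|^{2}$ together with continuity of $\grad f$, this yields $-\|\grad f(\bar p)\|^{2}\geq -\beta\|\grad f(\bar p)\|^{2}$, i.e.\ $(1-\beta)\|\grad f(\bar p)\|^{2}\leq 0$. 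Since $\beta<1$ this contradicts $\grad f(\bar p)\neq 0$, so $\grad f(\bar p)=0$ and convexity gives $\bar p\in\Omega^{*}$.

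Finally, applying the second part of Theorem~\ref{teo.qf} with $W=\Omega^{*}$ and the cluster point $\bar p\in\Omega^{*}$, I conclude $p_{k}\to\bar p$, establishing full convergence to a solution of \eqref{eq:OptP}.
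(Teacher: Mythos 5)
Your overall architecture is the same as the paper's: quasi-Fej\'er convergence to $\Omega^*$ via Lemma~\ref{pr:ltd} (dropping the non-positive term, bounding $t_k$ uniformly, telescoping), then showing some cluster point is stationary, then invoking Theorem~\ref{teo.qf}. The quasi-Fej\'er part, the Strategy~\ref{fixed.step} case, and the Strategy~\ref{armijo.step} case are all sound and essentially identical to the paper's argument.

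There is, however, a genuine gap in your treatment of Strategy~\ref{adaptive.step}. You assert that, because $t_{k_j}\to 0$, ``the previous trial $\eta\,t_{k_j}$'' was tested and rejected at iteration $k_j$. That presupposes $i_{k_j}\geq 1$ in \eqref{eq:GradMethodcpl}. But the adaptive scheme warm-starts each backtracking from the previously accepted value $t_{k_j-1}$, so it may well happen that $i_{k_j}=0$ for every $j$: the reductions of the stepsize can occur entirely at iterations outside your subsequence, in which case no trial is rejected at the iterations $k_j$ and the inequality you divide by $s_{k_j}$ is simply not available. (For Armijo this problem does not arise because the search restarts from $1$ at every iteration, so $t_k<1$ always certifies rejection of $2t_k$.) The paper closes exactly this hole by using the monotonicity of $\{t_k\}$ under Strategy~\ref{adaptive.step}: the whole sequence $t_k\to 0$, so one can pass to the iterations at which an actual reduction occurs (the smallest $k$ with $t_k$ below a prescribed threshold $(\eta^r L_0)^{-1}$), where a rejected trial with a \emph{fixed} stepsize exists; taking a convergent subsequence of those iterates produces a cluster point in $\Omega^*$, which is all Theorem~\ref{teo.qf} needs. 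A secondary, more minor point: your limit passage divides by a vanishing stepsize while the base point varies, so you need a uniformity argument (e.g.\ mean value along the geodesic plus joint continuity of $\grad f$ and $\exp$); the paper avoids this by keeping the trial stepsize fixed ($2^{-r+1}$, resp.\ $(\eta^{r-1}L_0)^{-1}$) as $j\to\infty$ and only afterwards letting $r\to\infty$, so that only differentiability at the limit point is used.
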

\begin{proof}
First note that  $f(q)-f(p_k)\leq 0$, for all $k=0,1, \ldots$ and  $q\in\Omega^*$.  Hence,   \eqref{eq:fixed.step},  \eqref{step.arm} and \eqref{des.arm.aux} imply $0<t_k\leq 1/L$ or    $0<t_k\leq 1/L_0$ or $0<t_k\leq 1$, for all $k=0,1, \ldots$, for Strategies~\ref{fixed.step}, \ref{adaptive.step} or  \ref{armijo.step}, respectively. Let  $\Gamma\coloneqq \max\{1, 1/L, 1/L_0\}$. Thus, for  Strategies~\ref{fixed.step}, \ref{adaptive.step} or  \ref{armijo.step} we conclude  from Lemma~\ref{pr:ltd}  that 
$$
d^2(p_{k+1},q)\leq d^2(p_k,q) +\frac{1}{\nu} \Gamma {\cal C}_{\rho,\kappa}^q\left[f(p_k)-f(p_{k+1})\right], \qquad k=0,1,\ldots, 
$$
for all  $q\in\Omega^*$. Considering that   $\sum_{i=0}^\infty [f(p_k)-f(p_{k+1})]\leq [f(p_0)-f^*] $, we conclude that  $\{p_k\}$ is quasi-Fej\'er convergent to $\Omega^*$. Therefore,  since  $\Omega^*$ is non-empty the sequence $\{p_k\}$ is bounded.  Let $\bar{p}$ be an cluster  point of $\left\{p_k\right\}$ and  $\left\{p_{k_j}\right\}$ be  a subsequence $\left\{p_{k}\right\}$ such that  $\lim_{j\to\infty} p_{k_j}= \bar{p}$.   It follows from  Lemma~\ref{lem:boud}  that  $\lim_{k\to\infty}t_k\left\|\grad f(p_k)\right\|^2=0$, and due to  $\{t_k\}$ has a cluster  point $\bar{t}\in [0,\Gamma]$, we analyze the following two possibilities
$$
{\bf (a)}~ \bar{t}>0,  \qquad \qquad   \qquad \qquad    {\bf (b)} ~ \bar{t}=0.
$$
 Assume that~${\bf (a)}$ holds. In this case,  considering that  $\lim_{k\to\infty}t_k\left\|\grad f(p_k)\right\|^2=0$  and $\grad f$ is continuous,    we conclude that 
$$
0=\lim_{j\to\infty} t_{k_j}\left\|\grad f(p_{k_j})\right\|= \bar{t}\left\|\grad f(\bar{p})\right\|.
$$
Hence,   $\grad f(\bar{p})=0$ and then   $\bar{p}\in\Omega^*$. Note that if   Strategy~\mbox{\ref{fixed.step}}   is used, then $\bar{t}$  satisfies  only~{\bf (a)}. Now, we assume that  ${\bf (b)}$ holds.
In this case  Strategies~ ~\mbox{\ref{adaptive.step}}  or \mbox{\ref{armijo.step}}  is used. First assume  Algorithm~\ref{sec:gradient} with Strategy~\mbox{\ref{adaptive.step}}. Since $\{t_{k_j}\}$ converges to $\bar{t}=0$ and $\{t_{k}\}$ is non-increasing, it  follows that $\{t_{k}\}$ converges to $\bar{t}=0$. Hence, taking $r\in\mathbb{N}$  we can conclude that  $t_{k}<(\eta^rL_0)^{-1}$  for $k$  sufficiently   large. Considering $k$ being the smallest natural number that satisfies $t_{k}<(\eta^rL_0)^{-1}$, by   \eqref{eq:GradMethodcpl}, we have
$$
f(\exp_{p_{k}}((\eta^{r-1}L_0)^{-1}[-\grad f(p_{k_j})]))> f(p_{k})-(\eta^{r-1}L_0)^{-1}\beta \left\|\grad f(p_{k})\right\|^2.
$$
Letting $k$ go to $+\infty$ in the above inequality and taking into account that $\grad f$ and the exponential mapping  are  continuous, we obtain
$$
f(\exp_{\bar{p}}((\eta^{r-1}L_0)^{-1}[-\grad f(\bar{p})]))\geq f(\bar{p})-(\eta^{r-1}L_0)^{-1} \beta\left\|\grad f(\bar{p})\right\|^2.
$$
The last inequality is equivalent to
$$
-\frac{f(\exp_{\bar{p}}((\eta^{r-1}L_0)^{-1} [-\grad f(\bar{p})]))-f(\bar{p})}{(\eta^{r-1}L_0)^{-1}}\leq \beta\left\|\grad f(\bar{p})\right\|^2.
$$
Thus, letting $r$ goes  to $+\infty$ we obtain $\left\|\grad f(\bar{p})\right\|^2\leq \beta\left\|\grad f(\bar{p})\right\|^2$ which implies $\grad f(\bar{p})=0$, i.e., $\bar{p}\in\Omega^*$. Therefore, since $\{p_k\}$ is quasi-Fej\'er convergent to $\Omega^*$, we conclude from Theorem~\ref{teo.qf}   that $\{p_k\}$ converges  to $\bar{p}$. Finally,  assume that  Strategy~\mbox{\ref{armijo.step}} is used. Since $\{t_{k_j}\}$ converges to $\bar{t}=0$, taking $r\in\mathbb{N}$,   we  conclude that $t_{k_j}<2^{-r}$ for $j$  sufficiently   large. Thus  Armijo's  condition \eqref{step.arm} is not satisfied for $t=2^{-r+1}$, i.e.,
$$
f(\exp_{p_{k_j}}(2^{-r+1}[-\grad f(p_{k_j})]))> f(p_{k_j})-2^{-r+1}\beta\left\|\grad f(p_{k_j})\right\|^2.
$$
Letting $j$ go to $+\infty$ in the above inequality and taking into account that $\grad f$ and the exponential mapping  are  continuous, we obtain
$$
f(\exp_{\bar{p}}(2^{-r+1}[-\grad f(\bar{p})]))\geq f(\bar{p})-2^{-r+1}\beta\left\|\grad f(\bar{p})\right\|^2.
$$
The last inequality is equivalent to
$$
-\frac{f(\exp_{\bar{p}}(2^{-r+1}[-\grad f(\bar{p})]))-f(\bar{p})}{2^{-r+1}}\leq \beta\left\|\grad f(\bar{p})\right\|^2.
$$
Thus, letting $r$ goes  to $+\infty$ we obtain $\left\|\grad f(\bar{p})\right\|^2\leq \beta\left\|\grad f(\bar{p})\right\|^2$ which implies $\grad f(\bar{p})=0$, i.e., $\bar{p}\in\Omega^*$. Therefore, since $\{p_k\}$ is quasi-Fej\'er convergent to $\Omega^*$, we conclude from 
 Theorem~\ref{teo.qf}   that $\{p_k\}$ converges  to $\bar{p}$ and  the proof is completed.
\end{proof}
\subsection{Iteration-Complexity Analysis} \label{Sec:IteCompAnalysis}
In this section we present an  iteration-complexity bound related to the gradient method  for minimizing a convex functions with  Lipschitz continuous gradient  with constant $L>0$.  In the following,  as an application of  Lemma~\ref{pr:ltd},    we obtain the iteration-complexity  bound for the gradient method with  Strategy~\emph{\ref{adaptive.step}}.
\begin{theorem}\label{teo.complexity.arm}
Let $\{p_k\}$ be    generated by by  Algorithm~\ref{sec:gradient}  with  Strategy~\emph{\ref{adaptive.step}}. Then,   for every $N\in \mathbb{N}$,  there  holds
\begin{equation}\label{complexity.arm}
 f(p_N)-f^*\leq \eta L\frac{L_0 ~d^{2}( p_{0},q)+ 2\left({\cal C}_{\rho,\kappa}^q-1\right)\left[f(p_0)-f^*\right]}{2NL_0},
\end{equation}
for each $q\in \Omega^*$. As a consequence, given  a tolerance $\epsilon>0$,  the number of iterations required to obtain $p_N\in \mathcal{M}$ such that $ f(p_N)-f^*< \epsilon$, is bounded by 
$$
 \eta L \left[ L_0 ~d^{2}( p_{0},q)+ 2\left({\cal C}_{\rho,\kappa}^q-1\right)\left[f(p_0)-f^*\right]\right]/(2L_0\epsilon)=\mathcal{O} \left(1/\epsilon\right).
$$
\end{theorem}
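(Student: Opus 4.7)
The plan is to apply Lemma~\ref{pr:ltd} with $\nu=\beta$ to a fixed $q\in\Omega^*$ (so that $f(q)=f^*$), then rearrange, telescope, and use monotonicity of $\{f(p_k)\}$ to isolate $f(p_N)-f^*$.

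First I would specialize Lemma~\ref{pr:ltd}. Moving the term $2t_k[f(q)-f(p_k)]$ to the left yields, for every $k\ge 0$,
\begin{equation*}
2t_k\,[f(p_k)-f^*]\;\le\;d^2(p_k,q)-d^2(p_{k+1},q)+\frac{t_k}{\beta}\,{\cal C}_{\rho,\kappa}^q\,[f(p_k)-f(p_{k+1})].
\end{equation*}
To produce the factor ${\cal C}_{\rho,\kappa}^q-1$ that appears in the target, I would rewrite the left-hand side via $f(p_k)-f^*=[f(p_{k+1})-f^*]+[f(p_k)-f(p_{k+1})]$ and regroup, obtaining
\begin{equation*}
2t_k\,[f(p_{k+1})-f^*]\;\le\;d^2(p_k,q)-d^2(p_{k+1},q)+\Bigl(\tfrac{{\cal C}_{\rho,\kappa}^q}{\beta}-2\Bigr)t_k\,[f(p_k)-f(p_{k+1})].
\end{equation*}
Because $\{f(p_k)\}$ is non-increasing by Lemma~\ref{lem:boud}, the bracket $[f(p_k)-f(p_{k+1})]$ is nonnegative; in the regime where the coefficient $\tfrac{{\cal C}_{\rho,\kappa}^q}{\beta}-2$ is nonnegative I would then substitute $t_k\le 1/L_0$ in the last term and $t_k\ge 1/(\eta L)$ in the left-hand side—both estimates coming from \eqref{des.arm.aux} in Remark~\ref{re:apxLipConst}.

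Next I would sum the resulting inequality from $k=0$ to $N-1$. The distances telescope to at most $d^2(p_0,q)$ and the function values telescope to at most $f(p_0)-f^*$, producing
\begin{equation*}
\frac{2}{\eta L}\sum_{k=0}^{N-1}[f(p_{k+1})-f^*]\;\le\;d^2(p_0,q)+\frac{2({\cal C}_{\rho,\kappa}^q-1)}{L_0}\,[f(p_0)-f^*].
\end{equation*}
Monotonicity of $\{f(p_k)\}$ gives $N[f(p_N)-f^*]\le \sum_{k=0}^{N-1}[f(p_{k+1})-f^*]$, and rearranging by $2N/(\eta L)$ delivers exactly \eqref{complexity.arm}. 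The iteration-complexity statement is then immediate: to force $f(p_N)-f^*<\epsilon$ it suffices to take $N$ larger than the right-hand side of \eqref{complexity.arm} divided by $\epsilon$, which is an $\mathcal{O}(1/\epsilon)$ quantity.

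The conceptually heavy lifting—absorbing the Riemannian curvature into the single constant ${\cal C}_{\rho,\kappa}^q$—has already been performed in Lemma~\ref{pr:ltd}, so the remaining work is essentially the Euclidean template. The step I expect to be trickiest is the coefficient bookkeeping: engineering the substitution $f(p_k)=f(p_{k+1})+[f(p_k)-f(p_{k+1})]$ to land on the specific factor $2({\cal C}_{\rho,\kappa}^q-1)$ rather than the cruder $\tfrac{1}{\beta}{\cal C}_{\rho,\kappa}^q$, and verifying that the multiplier $\tfrac{{\cal C}_{\rho,\kappa}^q}{\beta}-2$ is nonnegative so that replacing $t_k$ by its upper bound $1/L_0$ in that term preserves the direction of the inequality.
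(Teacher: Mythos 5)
Your proposal follows essentially the same route as the paper's proof: specialize Lemma~\ref{pr:ltd} with $f(q)=f^*$, split $f(p_k)-f^*=[f(p_{k+1})-f^*]+[f(p_k)-f(p_{k+1})]$, insert the stepsize bounds $1/(\eta L)\le t_k\le 1/L_0$ from \eqref{des.arm.aux}, telescope over $k=0,\dots,N-1$, and finish with monotonicity of $\{f(p_k)\}$. The one step you flag as tricky is precisely the point the paper glosses over: the paper passes from Lemma~\ref{pr:ltd} directly to the coefficient $2\bigl({\cal C}_{\rho,\kappa}^q-1\bigr)t_k$ "after some simple algebraic manipulations," whereas the honest coefficient with $\nu=\beta$ is $\bigl({\cal C}_{\rho,\kappa}^q/\beta-2\bigr)t_k$, exactly as in your intermediate display. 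These agree when $\beta=1/2$; more generally, if ${\cal C}_{\rho,\kappa}^q/\beta-2<0$ the term is nonpositive (since $f(p_k)-f(p_{k+1})\ge 0$) and can simply be dropped, and if it is nonnegative then replacing $t_k$ by $1/L_0$ is legitimate and the resulting coefficient is bounded by $2\bigl({\cal C}_{\rho,\kappa}^q-1\bigr)/L_0$ precisely when $\beta\ge 1/2$. For $\beta<1/2$ neither your argument nor the paper's yields \eqref{complexity.arm} as stated; the bound one actually obtains carries ${\cal C}_{\rho,\kappa}^q/\beta-2$ in place of $2\bigl({\cal C}_{\rho,\kappa}^q-1\bigr)$, which still gives the same $\mathcal{O}(1/\epsilon)$ conclusion. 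So your proof is at the same level of completeness as the paper's, and your bookkeeping concern identifies a genuine (if minor) looseness in the published argument rather than a defect specific to your approach.
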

\begin{proof}
Take $q\in \Omega^*$. After  some simple algebraic manipulations and taking into account that $f^*=f(q)$  for each $q\in \Omega^*$,  Lemma~\ref{pr:ltd}  becomes
$$
2t_k\left(f(p_{k+1})-f^*\right)\leq \left[d^2(p_k,q)-d^2(p_{k+1},q)\right] +2t_k\left[{\cal C}_{\rho,\kappa}^q-1\right]\left[f(p_k)-f(p_{k+1})\right],
$$
for all $k=0,1, \ldots$. Using \eqref{des.arm.aux} and taking into account that ${\cal C}_{\rho,\kappa}^q\geq 1$,  $f(p_{k+1})-f^*\geq 0$  and  $f(p_k)-f(p_{k+1}) \geq 0$,  for all $k=0,1, \ldots$, it  follows that 
$$
\frac{2}{\eta L}\left[f(p_{k+1})-f^*\right]\leq \left[d^2(p_k,q)-d^2(p_{k+1},q)\right] +\frac{2}{L_0}\left[{\cal C}_{\rho,\kappa}^q-1\right]\left[f(p_k)-f(p_{k+1})\right],
$$
Summing both sides of the above inequality  for $k= 0,1, \ldots, N-1$, we obtain
$$
\frac{2}{\eta L}\sum_{i=0}^{N-1}\left[f(p_{i+1})-f^*\right]\leq  \left[d^2(p_0,q)-d^2(p_{N},q)\right] +\frac{2}{L_0}\left[{\cal C}_{\rho,\kappa}^q-1\right]\left[f(p_0)-f(p_{N})\right].
$$
Since  $\{f(x_k)\}$ is a decreasing sequence,  we conclude that 
$$
\frac{2}{\eta L}N\left(f(p_{N})-f^*\right)\leq \left[d^2(p_0,q)-d^2(p_{N},q)\right] +\frac{2}{L_0}\left[{\cal C}_{\rho,\kappa}^q-1\right]\left[f(p_0)-f(p_{N})\right],
$$
which  is equivalent to  \eqref{complexity.arm}.  The second  statement of the theorem  follows as  an immediate consequence of the first part.
\end{proof}
Whenever the Lipschitz constant  $L>0$ is computable, we can take a constant stepsize  and  Theorem~\ref{teo.complexity.arm} trivially implies the following result.
 \begin{theorem}\label{th:grad2}    Let $\{p_k\}$ be    generated by  by  Algorithm~\ref{sec:gradient} with    Strategy~\emph{\ref{fixed.step}}.  Then,   for every $N\in \mathbb{N}$,  there holds
\begin{equation} \label{eq:complexity}
 f(p_N)-f^*\leq \frac{L ~d^{2}( p_{0},q)+ 2\left({\cal C}_{\rho,\kappa}^q-1\right)\left[f(p_0)-f^*\right]}{2N}, 
\end{equation}
for each $q\in \Omega^*$. As a consequence, given  a tolerance $\epsilon>0$,  the number of iterations required by the gradient method to obtain $p_N\in \mathcal{M}$ such that $ f(p_N)-f^*< \epsilon$, is bounded by 
$$
 \left[ L ~d^{2}( p_{0},q)+ 2\left({\cal C}_{\rho,\kappa}^q-1\right)\left[f(p_0)-f^*\right]\right]/(2\epsilon)=\mathcal{O} \left(1/\epsilon\right).
$$
\end{theorem}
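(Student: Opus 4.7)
The plan is to mirror the proof of Theorem~\ref{teo.complexity.arm} with the parameters appropriate to Strategy~\ref{fixed.step} instead of Strategy~\ref{adaptive.step}. Concretely, Strategy~\ref{fixed.step} corresponds to $\nu = 1/2$ in Lemma~\ref{pr:ltd}, and the tightest version of the bound is obtained by the maximal admissible choice $t_k \equiv 1/L$, which is a valid selection whenever $\varepsilon$ in \eqref{eq:fixed.step} is taken smaller than $1/L$.

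First, I would specialize Lemma~\ref{pr:ltd} to this setting. With $\nu = 1/2$ and $f(q) = f^*$ for $q \in \Omega^*$, writing $f^* - f(p_k) = -[f(p_k) - f(p_{k+1})] - [f(p_{k+1}) - f^*]$ and rearranging yields the working inequality
$$2t_k\bigl[f(p_{k+1}) - f^*\bigr] \leq d^2(p_k,q) - d^2(p_{k+1},q) + 2t_k\bigl[{\cal C}_{\rho,\kappa}^q - 1\bigr]\bigl[f(p_k) - f(p_{k+1})\bigr],$$
valid for every $k = 0, 1, \ldots$. This is the exact analogue of the intermediate inequality employed in the proof of Theorem~\ref{teo.complexity.arm}; the algebra closes cleanly precisely because $\nu = 1/2$ makes $t_k/\nu = 2t_k$, so that the mixed term simplifies to $2t_k[{\cal C}_{\rho,\kappa}^q - 1]$.

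Next, I would substitute $t_k = 1/L$ and sum the inequality over $k = 0, 1, \ldots, N-1$. The distance terms telescope and the non-negative remainder $d^2(p_N,q)$ can be dropped, leaving $d^2(p_0,q)$; the function-value differences also telescope, giving a factor of $f(p_0) - f(p_N) \leq f(p_0) - f^*$. On the left-hand side, the monotonicity of $\{f(p_k)\}$ established in Lemma~\ref{lem:boud} yields $\sum_{k=0}^{N-1}[f(p_{k+1}) - f^*] \geq N[f(p_N) - f^*]$. Combining these estimates and multiplying through by $L/(2N)$ delivers \eqref{eq:complexity}. The iteration-complexity count then follows immediately by demanding that the right-hand side of \eqref{eq:complexity} be smaller than $\epsilon$ and solving for $N$, giving the $\mathcal{O}(1/\epsilon)$ rate.

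There is no substantial obstacle beyond careful bookkeeping: one must verify that the coefficient $2t_k[{\cal C}_{\rho,\kappa}^q - 1]$ emerges correctly from the rearrangement, and that the constant choice $t_k = 1/L$ is admissible under Strategy~\ref{fixed.step}. Compared with Theorem~\ref{teo.complexity.arm}, the factor $\eta L / L_0$ is replaced simply by $L$, reflecting that no line-search overhead is incurred when the Lipschitz constant $L$ is known a priori; this is precisely why the author describes the implication from Theorem~\ref{teo.complexity.arm} as trivial.
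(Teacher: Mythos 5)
Your proposal is correct and follows essentially the same route as the paper: the paper declares that Theorem~\ref{teo.complexity.arm} ``trivially implies'' the result for the constant stepsize $t_k=1/L$, and what you do is exactly that specialization --- rearranging Lemma~\ref{pr:ltd} with $\nu=1/2$ (so $t_k/\nu=2t_k$), substituting $t_k=1/L$, telescoping, and using monotonicity of $\{f(p_k)\}$ from Lemma~\ref{lem:boud}. Your explicit remark that the constant choice $t_k=1/L$ is admissible under Strategy~\ref{fixed.step} matches the paper's intended reading (and its use of $t_k=1/L$ in Corollary~\ref{cr:icgm}), so no gap remains.
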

We remark that, if $\kappa=0$ then  ${\cal C}_{\rho,\kappa}^q=1$.  As a consequence,  Theorem~\ref{th:grad2} merges into  \cite[Theorem 3.2]{BentoFerreiraMelo2017}.
\begin{corollary} \label{cr:icgm}
 Let $\{p_k\}$ be    generated by by  Algorithm~\ref{sec:gradient}  with    Strategy~\emph{\ref{fixed.step}}.   Then,   for every $N\in \mathbb{N}$,  there holds
\begin{equation} \label{eq:complexity2}
\min\left\{\|\grad f(p_{k})\|~:~ k=0, 1,\ldots, N \right\}\leq  \frac{2\sqrt{L\left[L ~d^{2}( p_{0},q)+ 2\left({\cal C}_{\rho,\kappa}^q-1\right)\left[f(p_0)-f^*\right]\right]}}{N},
\end{equation}
for each $q\in \Omega^*$.  As a consequence, given  a tolerance $\epsilon>0$,  the number of iterations required by the gradient method to obtain $p_N\in \mathcal{M}$ such that $ \|\grad f(p_N)\|< \epsilon$, is bounded by $\mathcal{O} (  \sqrt{L\left[L ~d^{2}( p_{0},q)+ 2\left({\cal C}_{\rho,\kappa}^q-1\right)\left[f(p_0)-f^*\right]\right]}/\epsilon)$.
\end{corollary}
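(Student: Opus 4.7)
The plan is to combine the $\mathcal{O}(1/N)$ function-value rate of Theorem~\ref{th:grad2} with the sharp one-step descent at the Lipschitz stepsize, via the familiar ``second-half'' averaging trick used to upgrade a $1/\sqrt{N}$ gradient bound to a $1/N$ one under convexity.

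First, I would apply Lemma~\ref{le:lc} at $t=1/L$, the maximal stepsize admitted by Strategy~\ref{fixed.step}, to obtain the sharp per-iteration descent
\[
\|\grad f(p_k)\|^2 \leq 2L\bigl[f(p_k) - f(p_{k+1})\bigr], \qquad k = 0, 1, \ldots.
\]
Telescoping this inequality from $k = m$ to $k = N-1$, with $m := \lceil N/2 \rceil$, and using $f(p_N) \geq f^*$ yields $\sum_{k=m}^{N-1} \|\grad f(p_k)\|^2 \leq 2L\,[f(p_m)-f^*]$.

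Next I would invoke Theorem~\ref{th:grad2} at iteration $m$, which bounds $f(p_m)-f^* \leq C/(2m)$, where $C := L\,d^2(p_0,q) + 2\bigl(\mathcal{C}^q_{\rho,\kappa}-1\bigr)[f(p_0)-f^*]$ is exactly the quantity appearing under the square root in \eqref{eq:complexity2}. Plugging this in gives $\sum_{k=m}^{N-1}\|\grad f(p_k)\|^2 \leq LC/m$. Since the minimum of $N-m$ nonnegative terms is at most $(N-m)^{-1}$ times their sum, and since $m(N-m) \geq N^2/4$ for $m = \lceil N/2 \rceil$, one deduces $\min_{m \leq k < N}\|\grad f(p_k)\|^2 \leq 4LC/N^2$. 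Extracting the square root and noting that the minimum over all $k \leq N$ is no larger than the minimum over the second half of the indices yields \eqref{eq:complexity2}. The iteration-complexity consequence then follows by inverting $2\sqrt{LC}/N \leq \epsilon$ to read off $N = \mathcal{O}(\sqrt{LC}/\epsilon)$.

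The main subtlety I anticipate is the stepsize bookkeeping: Strategy~\ref{fixed.step} permits any $t_k \in (\varepsilon, 1/L]$, but the factor $2L$ in the per-iteration descent above is sharp only at $t_k = 1/L$. For other admissible stepsizes one would use the stronger relation $(t_k - Lt_k^2/2)\,\|\grad f(p_k)\|^2 \leq f(p_k)-f(p_{k+1})$ coming directly from Lemma~\ref{le:lc}, and absorb the resulting stepsize-dependent coefficient into the constant; the rest of the argument is unchanged and no new idea is required.
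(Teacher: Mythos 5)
Your route is essentially the paper's: telescope the one-step descent $\|\grad f(p_k)\|^2\le 2L\,[f(p_k)-f(p_{k+1})]$ (valid for $t_k=1/L$ by Lemma~\ref{le:lc}, equivalently Lemma~\ref{lem:boud}) over the second half of the iterations, control $f(p_{\lceil N/2\rceil})-f^*$ via Theorem~\ref{th:grad2}, and average. However, there is a concrete counting slip in your last step. Because you stop the telescope at $k=N-1$, your window $\{m,\dots,N-1\}$ with $m=\lceil N/2\rceil$ has only $N-m=\lfloor N/2\rfloor$ indices, and the inequality $m(N-m)\ge N^2/4$ that you invoke is false whenever $N$ is odd: there $m(N-m)=(N^2-1)/4<N^2/4$, so you only obtain $\min\|\grad f(p_k)\|^2\le 4LC/(N^2-1)$, which just misses \eqref{eq:complexity2}; worse, for $N=1$ the window is empty and the argument produces nothing. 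The paper avoids this by telescoping through $j=N$, i.e.\ keeping the nonnegative term $f(p_{N+1})-f^*$, so that the window $\{\lceil N/2\rceil,\dots,N\}$ has $N-\lceil N/2\rceil+1\ge N/2$ elements while $\lceil N/2\rceil\ge N/2$, whence $\lceil N/2\rceil\,\bigl(N-\lceil N/2\rceil+1\bigr)\ge N^2/4$ and the stated constant follows exactly. Including the index $k=N$ in your sum repairs the argument with no other change.

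On the stepsize subtlety you flag: the paper's proof simply takes $t_k=1/L$ (this is also how Theorem~\ref{th:grad2} is meant). With a general admissible $t_k\in(\varepsilon,1/L]$, Lemma~\ref{lem:boud} only gives $\|\grad f(p_k)\|^2\le (2/t_k)\,[f(p_k)-f(p_{k+1})]\le (2/\varepsilon)\,[f(p_k)-f(p_{k+1})]$, so ``absorbing the coefficient into the constant'' would replace $L$ by $1/\varepsilon$ under the square root and would not reproduce \eqref{eq:complexity2} verbatim; the constant-stepsize choice $t_k=1/L$ is genuinely needed for the bound as written, not merely a convenience.
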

\begin{proof}
Let $N\in \mathbb{N}$.  Using the notation ${\lceil N/2 \rceil }$   for    the least integer that is greater than or equal to $N/2$,  we have
\begin{equation} \label{eq:eqcg}
f(p_{N+1})-f^*+ \sum_{j=\lceil N/2\rceil }^{N}\left[ f(p_{j})- f(p_{j+1})  \right]= f(p_{\lceil N/2\rceil })-f^*. 
\end{equation}
Thus,  combining the  last inequality with   Theorem~\ref{th:grad2},  we conclude that 
\begin{equation} \label{eq:eqcgc}
f(p_{N+1})-f^*+ \sum_{j=\lceil N/2\rceil }^{N}\left[ f(p_{j})- f(p_{j+1})  \right]\leq \frac{L ~d^{2}( p_{0},q)+ 2\left({\cal C}_{\rho,\kappa}^q-1\right)\left[f(p_0)-f^*\right]}{2 \lceil N/2\rceil}.
\end{equation}
On the other hand,  using Lemma~\ref{lem:boud} and considering that $t_{k}=1/L$,   we obtain 
$$
 \frac{1}{2L}\sum_{j=\lceil N/2\rceil }^{N}\left\|\grad f(p_j)\right\|^{2} \leq  \sum_{j=\lceil N/2\rceil }^{N}\left[ f(p_{j})- f(p_{j+1})  \right]\leq  f(p_{\lceil N/2\rceil })-f^* .
$$
In view of  $N/2\leq  \lceil N/2\rceil$, the above inequality together with \eqref{eq:eqcg} and \eqref{eq:eqcgc} yield  
$$
\frac{1}{2L}\sum_{j=\lceil N/2\rceil }^{N}\left\|\grad f(p_j)\right\|^{2} \leq \frac{L ~d^{2}( p_{0},q)+ 2\left({\cal C}_{\rho,\kappa}^q-1\right)\left[f(p_0)-f^*\right]}{N}.
$$
Therefore, 
 $$
\min\{\|\grad f(p_{k})\|^2~;~ k=\lceil N/2 \rceil , \ldots, N\}\leq  \frac{4L\left[L ~d^{2}( p_{0},q)+ 2\left({\cal C}_{\rho,\kappa}^q-1\right)\left[f(p_0)-f^*\right]\right]}{N^2}, 
 $$
which implies the  desired inequality.  The second  statement of the corollary follows as  an immediate consequence of the first one.
\end{proof}
We  end this section  by recalling  an  iteration-complexity bound for  non-convex functions defined in a general Riemannian manifolds, which   appeared  in \cite{BoumalAbsilCartis2016}. 
\begin{theorem}\label{th:grad1} 
Let $\{p_k\}$ be    generated by  by  Algorithm~\ref{sec:gradient} with    Strategy~\emph{\ref{fixed.step}}.  Then,   for every $N\in \mathbb{N}$,  there  holds
$$
\min \left\{\|\grad f(p_{k})\|~:~ k=0, 1,\ldots, N \right\}\leq \frac{\sqrt{ 2L(f(p_0) -f^*)}}{\sqrt{N+1}}.  
$$
As a consequence, given  a tolerance $\epsilon>0$,  the number of iterations required to obtain $p_N\in \mathcal{M}$ such that $\|\grad f(p_{N})\|<\epsilon$ is bounded by  ${\cal O} ( L(f(p_0) -f^*)/\epsilon^2)$.
\end{theorem}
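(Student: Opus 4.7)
The plan is simply to replay the classical Euclidean proof of the $\mathcal{O}(1/\sqrt{N+1})$ bound on the minimum gradient norm for the gradient method applied to a function with Lipschitz continuous gradient. The entire Riemannian content is already packaged inside the descent inequality provided by Lemma~\ref{le:lc}, so once that lemma is invoked the argument reduces to a one‑line telescoping calculation. In particular, neither the lower curvature bound $K\geq\kappa$ nor the full power of Lemma~\ref{lemli} (which was the workhorse in the convex analysis of Sections~\ref{sec.main}--\ref{Sec:IteCompAnalysis}) is needed here, which is why the stated bound matches its Euclidean counterpart verbatim.

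The argument proceeds in three elementary steps. First, I would apply Lemma~\ref{le:lc} at $p=p_{k}$ with the maximal admissible stepsize $t=t_{k}=1/L$; since $1-Lt_{k}/2=1/2$ this yields the per-iteration decrease
$$
f(p_{k+1})\leq f(p_{k})-\frac{1}{2L}\|\grad f(p_{k})\|^{2},\qquad k=0,1,\ldots.
$$
Second, I would telescope this inequality from $k=0$ to $k=N$ and bound the cumulative drop by $f(p_{0})-f(p_{N+1})\leq f(p_{0})-f^{*}$, obtaining
$$
\frac{1}{2L}\sum_{k=0}^{N}\|\grad f(p_{k})\|^{2}\leq f(p_{0})-f^{*}.
$$
Third, bounding the sum from below by $(N+1)\min_{0\leq k\leq N}\|\grad f(p_{k})\|^{2}$ and extracting a square root delivers the stated inequality. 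The complexity assertion then falls out by requiring the right-hand side to be at most $\epsilon$, which forces $N+1\geq 2L(f(p_{0})-f^{*})/\epsilon^{2}$.

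There is essentially no obstacle in this argument. The two points worth checking are that the choice $t_{k}=1/L$ is admissible under Strategy~\ref{fixed.step} (immediate from its defining range $\varepsilon<t_{k}\leq 1/L$) and that the side condition $\exp_{p_{k}}(-t_{k}\grad f(p_{k}))\in\mathcal{D}$ needed by Lemma~\ref{le:lc} is satisfied (which is built into the hypotheses of Strategy~\ref{fixed.step} itself, so that the iterates remain in $\mathcal{L}_{c}f\subset\mathcal{D}$ throughout). Once these two admissibility checks are granted, the calculation is identical to the standard Euclidean one.
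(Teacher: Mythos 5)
Your argument is correct, but note that the paper does not actually prove Theorem~\ref{th:grad1}: it is explicitly recalled from \cite{BoumalAbsilCartis2016}, so there is no in-paper proof to match. What you wrote is precisely the classical descent-lemma-plus-telescoping argument that underlies that reference, and it is valid here because, as you observe, all the Riemannian content sits in Lemma~\ref{le:lc} (equivalently, in the case $\nu=1/2$ of Lemma~\ref{lem:boud}); neither convexity nor the curvature bound $K\geq\kappa$ enters, which is exactly why this bound is weaker than the convex one of Corollary~\ref{cr:icgm} that the paper derives via Lemma~\ref{lemli}. The only point to be careful about is your first step: Strategy~\ref{fixed.step} permits any $t_k\in(\varepsilon,1/L]$, and with $t_k<1/L$ the telescoping of $f(p_{k+1})\leq f(p_k)-\tfrac{1}{2}t_k\|\grad f(p_k)\|^2$ only yields $\min_{0\leq k\leq N}\|\grad f(p_k)\|^2\leq 2(f(p_0)-f^*)/\bigl((N+1)\min_k t_k\bigr)$, which is weaker than the stated bound with $L$. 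So your proof really establishes the theorem for the constant stepsize $t_k=1/L$; this is, however, exactly the reading the paper itself adopts (see the sentence preceding Theorem~\ref{th:grad2} and the use of $t_k=1/L$ in the proof of Corollary~\ref{cr:icgm}), so your choice is consistent with the intended statement, and it would be worth stating this restriction explicitly rather than attributing it to Strategy~\ref{fixed.step} at large.
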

Under the assumption  of convexity and lower  boundedness of curvature,  we conclude that  Corollary~\ref{cr:icgm} improves  Theorem~\ref{th:grad1}. It is worth to point out that  results  on iteration-complexity bound   to the gradient method on Riemannian manifold with non-negative curvature and in  Hadamard manifolds  with lower bound curvature has already appeared \cite{BentoFerreiraMelo2017,ZhangReddiSra2016,ZhangSra2016}. The result of this section present  a contribution to the systematic study of the iteration-complexity of  the gradient  methods in the Riemannian setting.
\section{Examples}\label{Sec:Examples}
In the following  sections, we present some  examples of functions satisfying the assumptions of our  results in the previous sections.  In particular  we show that, by endowing the   constrained set  with a suitable  Riemannian metric, a constrained  Euclidean optimization problem  with non-convex objective function  having  non-Lipschitz gradient can be seen as unconstrained  Riemanian optimization problem with  convex objective   function  having  Lipschitz gradient.    {\it Throughout the next  sections we denote 
 $$
 \mathbb{R}_{++}^{n}\coloneqq \left\{x\coloneqq (x_1, \ldots, x_n)^{T}\in\mathbb{R}^{n\times 1}~:~~x_i>0,~~i=1,\ldots,n\right\}, 
 $$
 the positive orthant,  ${\mathbb P}^{n}$  the set of  symmetric matrices of order $n\times n$ and  ${\mathbb P}^{n}_{++}$  the cone of  symmetric positive definite matrices}.

\subsection{Examples in the positive orthant}\label{Sec:ExamplesPO}
In this section,  we present  examples in the  positive orthant endowed with a new Riemannian metric.  To present this examples we need some definitions and results  of Riemannian geometry.  Endowing $\mathbb{R}_{++}^n$ with the Riemannian metric $\left\langle \cdot,\cdot\right\rangle$ defined by $\left\langle u,v\right\rangle\coloneqq u^TG(x)v$,   for all $x\in\mathbb{R}_{++}^{n}$ and  $u, v \in T_{x} \mathbb{R}_{++}^n \equiv\mathbb{R}^n$,  where $G : \mathbb{R}_{++}^n \to {\mathbb P}^{n}_{++}$ is given by 
\begin{equation} \label{eq:MetricPO}
G(x)\coloneqq \mbox{diag}(x^{-2}_{1},\ldots,x^{-2}_{n})\in \mathbb{R}^{n\times n}, 
\end{equation}
we obtain  a  complete Riemannian manifold with   zero curvature, which will be denoted by   ${\mathcal M}\coloneqq (\mathbb{R}_{++}^n,G)$.  Let $f: {\mathcal M} \to \mathbb{R}$ be a  twice differentiable function.  We denote by  $f'(x)$ and  $f''(x)$   the  Euclidean gradient and  hessian of $f$ at $x$,  respectively. Thus,  \eqref{eq:MetricPO} implies  that the Riemannian {\it gradient} and {\it hessian} of  $f$  are given, respectively,  by
\begin{align}  
\mbox{grad} f(x)&= \mbox{diag} (x)^{2}f'(x),    \qquad x\in{\mathcal M},\label{eq:GradExPO} \\
\mbox{hess}\,f(x)v&=  \left[\mbox{diag}(x)^2 f''(x)+ \mbox{diag}(x)  \mbox{diag}\left(f'(x)\right)\right]v,     \qquad x\in{\mathcal M}, \label{eq:HessExPO}
\end{align}
 where $\mbox{diag} (x)\coloneqq \mbox{diag}(x_{1},\ldots,x_{n})\in \mathbb{R}^{n\times n}$.   Next we present two examples of    convex functions with Lipschitz gradient in  ${\mathcal M}\coloneqq (\mathbb{R}_{++}^n,G)$.
\begin{example} \label{Ex:Log2Log}
 Consider the function  $f:\mathbb{R}_{++}^n\to\mathbb{R}$ defined by  
\begin{equation} \label{eq:Log2Log}
f(x)\coloneqq \sum_{i=1}^nf_i(x_i), \qquad f_{i}(x_i)\coloneqq -a_i\mathrm{e}^{-b_ix_i}+c_i\ln\left(x_i\right)^2+d_i\ln\left(x_i\right),  \quad i=1,\ldots n, 
\end{equation}
where  $a_i,b_i,d_i\in{\mathbb{R}_{+}}$ and $c_i\in{\mathbb{R}_{++}}$ satisfy  $c_i>a_i$. Since $f$ is coercive, it has a minimum.  By using \eqref{eq:Log2Log}  the  first  and second derivative of $f$ at $x\in \mathbb{R}_{++}^n$ are   given by $f'(x)\coloneqq  \left(f'_{1}(x_1), \ldots, f'_{n}(x_n)\right)$ and $f''(x)\coloneqq  \emph{diag}\left(f''_1(x_1), \ldots, f''_n(x_n)\right)$, where
\begin{equation}\label{eq:fhmx2s.ex2}
 f'_{i}(x_i)=a_ib_i\mathrm{e}^{-b_ix_i}+2c_i\frac{\ln(x_i)}{x_i}+\dfrac{d_i}{x_i}, \quad f''_{i}(x_i)=-a_ib_i^2\mathrm{e}^{-b_ix_i}+2c_i\left[\frac{1-\ln(x_i)}{x_i^2}\right]-\dfrac{d_i}{x_i^2}, 
\end{equation}
for all $ i=1, \dots, n$. Note that $f''_{i}(1)<0$, for all $ i=1, \dots, n$,  and then   $f$ is not Euclidean convex. 
Using   \eqref{eq:HessExPO} and \eqref{eq:fhmx2s.ex2}  the  hessian of  $f$ in ${\mathcal M}\coloneqq (\mathbb{R}_{++}^n,G)$ is given by
$$
\emph{Hess}\,f(x)v\coloneqq \left(a_1b_1\mathrm{e}^{-b_1x_1}\left(x_1-b_1x_1^2\right)+2c_1, \dots, a_nb_n\mathrm{e}^{-b_nx_n}\left(x_n-b_nx_n^2\right)+2c_n \right)v.
$$
Since $c_i>a_i$,  we have $a_ib_i\mathrm{e}^{-b_ix_i}\left(x_i-b_ix_i^2\right)+2c_i\geq0$, for all   $ i=1, \dots, n$.  Hence,  by using the definition of the metric,   for  $ v=(v_1, \ldots, v_n)^{T}\in \mathbb{R}^n$ and $x\in{\mathbb{R}_{++}}$,  we have 
$$
\left\langle \emph{Hess}\,f(x)v, v\right\rangle=\sum_{i=1}^n\left[a_ib_i\mathrm{e}^{-b_ix_i}\left(x_i-b_ix_i^2\right)+2c_i\right]\frac{v_i^2}{x_i^2}\geq 0, 
$$
concluding that  $f$ is convex in ${\mathcal M}$.  Since $\|v\|=v^TG(x)v= 1$,  we have $v_{i}^2\leq x_{i}^2$ and  owing  that $\left(a_ib_i\mathrm{e}^{-b_ix_i}\left(x_i-b_ix_i^2\right)+2c_i\right)< a_i+2c_i$, for all  $i=1, \ldots, n$ ,  we obtain 
$$
\left\|\emph{Hess}\,f(x) v\right\|^2=\sum_{i=1}^n\left[a_ib_i\mathrm{e}^{-b_ix_i}\left(x_i-b_ix_i^2\right)+2c_i\right]^2\frac{v_i^2}{x_i^2}< \sum_{i=1}^n(a_i+2c_i)^2,  \qquad x\in{\mathbb{R}_{++}}.
$$
Therefore,  \eqref{eq:DefNormHess} and  Lemma~\ref{le:CharactGL}  imply  that $\grad f$ is Lipschitz with  $L< \sum_{i=1}^n(a_i+2c_i)^2$. 
\end{example}
\begin{example} \label{Ex:LogLog}
Consider the function  $f:\mathbb{R}_{++}^n\to\mathbb{R}$ defined by  
\begin{equation} \label{eq:LogLog}
f(x)\coloneqq \sum_{i=1}^nh_i(x_i), \qquad f_{i}(x_i)\coloneqq a_i\ln\left(x^{d_i}_i+b_i\right) - c_i \ln\left(x_i\right),  \qquad i=1,\ldots n, 
\end{equation}
where $a_i,b_i,c_i,d_i\in{\mathbb{R}_{++}}$ satisfy   $c_i< a_id_i$ and $d_i\geq2$, for all $i=1,\ldots n$. The minimizer of $f$  is $x^*=(x^*_1, \dots, x^*_n)$, where $x^*_i=\sqrt[d_i]{b_ic_i/(a_id_i-c_i)}$, for  $i=1, \dots, n$. Function $f$ in  \eqref{eq:LogLog} is not Euclidean convex. However,  by following the same steps  as in the Example~\ref{Ex:Log2Log},  we can show that $f$ is convex  and  has gradient  Lipschitz   with constant  $L<\sum_{i=1}^na^2_id_i^4$   in ${\mathcal M}= (\mathbb{R}_{++}^n,G)$.
 \end{example}
  We end this section by presenting,  without giving the details,  two more examples of  convex  functions with   Lipschitz gradients   in ${\mathcal M}\coloneqq (\mathbb{R}_{++}^n,G)$.
 \begin{remark}
  Let $a,b,c\in\mathbb{R_{++}}$. Define  $h_1:\mathbb{R}_{++}^n\to\mathbb{R}$ by  $h_1(x)\coloneqq a\ln\left(x^Tx+b\right) -c \ln\left(x_1\ldots x_n\right),$ where   $n c< 2a$,  and $h_2:\mathbb{R}_{++}^n\to\mathbb{R}$   by $h_2(x)=a\ln\left((x_1\ldots x_n)^2+b\right) -c \ln\left(x_1\ldots x_n\right)$.  By using similar arguments   of Examples~\ref{Ex:Log2Log},   we can prove that   $h_1$ and  $h_2$  are also   convex  with Lipschitz gradient in the Riemannian manifold ${\mathcal M}= (\mathbb{R}_{++}^n,G)$.
 \end{remark}

\subsection{Examples in the SPD matrices cone}\label{Sec:ExampleSDP}
In this section,  we present   examples in the cone of symmetric positive definite matrices with new Riemannian metric.  Following Rothaus~\cite{Rothaus1960}, let $\mathcal{M}\coloneqq ({\mathbb P}^n_{++}, \langle \cdot , \cdot \rangle)$ be the Riemannian manifold endowed with the Riemannian metric given by  
\begin{equation}\label{eq:metric}
\langle U,V \rangle\coloneqq \mbox{tr} (VX^{-1}UX^{-1}),\qquad X\in \mathcal{M}, \qquad U,V\in
T_X\mathcal{M},
\end{equation}
where $\mbox{tr}(X)$ denotes the trace of  $X\in {\mathbb P}^n$ and $T_X\mathcal{M}\approx\mathbb{P}^n$.  In fact,  $\mathcal{M}$ is a Hadamard manifold,  see  for example \cite[Theorem 1.2. p. 325]{Lang1999} and  its curvature  is bound below;  see \cite{LengletRoussonDericheFaugeras2006}.  The {\it gradient} and {\it hessian} of $f:{\mathbb P}^n_{++}\longrightarrow\mathbb{R}$ are  given by
\begin{align} 
\mbox{grad} f(X)&=Xf'(X)X, \label{eq:Grad}\\
\mbox{hess}\,f(X)V&=Xf''(X)VX+\frac{1}{2}\left[  Vf'(X)X+  Xf'(X)V \right] \label{eq:Hess},
\end{align}
where $V\in T_X\mathcal{M}$, $f'(X)$ and  $f''(X)$ are the  Euclidean gradient and  hessian of  $f$ at $X$, respectively.  In the following,   we present two examples of    convex functions with   Lipschitz gradient in  $\mathcal{M}\coloneqq ({\mathbb P}^n_{++}, \langle \cdot , \cdot \rangle)$.
\begin{example} \label{ex:psdms}
Consider the function    $f:{\mathbb P}^n_{++}\longrightarrow\mathbb{R}$ defined by 
 \begin{equation}  \label{eq:fpdm4}
f(X)= a\ln(\det(X))^2-b\ln\left(\det(X)\right). 
\end{equation}
where $a,b\in{\mathbb{R}_{++}}$.  The  Euclidean gradient and hessian   of  $f$ are  given, respectively,  by
\begin{align} 
f'(X) &= \left[2a\ln(\det(X))-b\right]X^{-1},  \label{eq:GradE4}\\
f''(X)V &= 2a\,\emph{tr}(X^{-1}V)  X^{-1}-\left[2a\ln(\det(X))-b\right]X^{-1}VX^{-1},   \label{eq:HessE4}
\end{align}
for all $X\in {\mathbb P}^n_{++}$ and $V\in{\mathbb P}^n$.  
It follows from  \eqref{eq:GradE4}  that each $X\in \mathcal{M}$ satisfying  $\det X=e^{b/(2a)}$ is a critical point of $f$.  Thus, letting $V=I_{n}$ and $X=tI_{n}$ with $t\in{\mathbb{R}_{++}}$  in   \eqref{eq:HessE4}  we obtain that 
$
f''(tI_n)I_n= [2a n t^{-2}-2an\ln t+b]I_n.
$
Thus  $f''(tI_n)$ is not positive definite for $t$ sufficiently large. Hence, $f$ is not Euclidean  convex. Moreover,   $f''$ is not bounded and consequently $f'$ is not Lipschitz. On the other hand, combining  \eqref{eq:Hess}, \eqref{eq:GradE4} and \eqref{eq:HessE4}, after some calculation we obtain \begin{equation}\label{eq:HessR4}
\emph{Hess}\,f(X)V= 2a\,\emph{tr}(X^{-1}V)X, \qquad \langle\emph{Hess}\,f(X)V, V\rangle=2a\emph{tr}(X^{-1}V)^2\geq 0,
\end{equation}
for all $X\in \mathcal{M}$ and $V\in T_X\mathcal{M}$. Thus,   $f$ is  convex in $\mathcal{M}$. Moreover,   \eqref{eq:metric} with  \eqref{eq:HessR4} yield  $\|\emph{Hess}\,f(X)V\| =2a\emph{tr}(X^{-1}V)$, for all $X\in \mathcal{M}$ and $V\in T_X\mathcal{M}.$ If we assume that $\|V\|^2=\emph{tr} (VX^{-1}VX^{-1})=1$ then $\emph{tr}(X^{-1}V)\leq \sqrt{n}$. Hence, 
$$
\|\emph{Hess}\,f(X)V\|\leq2a  \sqrt{n},  \qquad X\in\mathcal{M}, \qquad V\in T_X\mathcal{M}, \quad \|V\|=1.
$$
 Therefore,   \eqref{eq:DefNormHess} and  Lemma~\ref{le:CharactGL}   imply  that  $\grad f$ is Lipschitz with constant  $L\leq 2a  \sqrt{n}$.   
\end{example}
\begin{example} \label{ex:psdm}
 Consider  the function  $f:{\mathbb P}^n_{++}\longrightarrow\mathbb{R}$ defined by 
\begin{equation}  \label{eq:fpdm}
f(X)= a\ln\left(\det(X)^{b_1}+b_2\right) -c\ln\left(\det X\right), 
\end{equation}
where  $a,b_1, b_2,c\in{\mathbb{R}_{++}}$ with  $c< ab_1$.    Function $f$ in  \eqref{eq:fpdm}  is not Euclidean convex. On the other hand,  by using similar arguments  as in the Example~\ref{ex:psdms},  we can see  that $f$ is convex  and  has Lipschitz  gradient  with constant  $L<ab_1^2n$   in $\mathcal{M}=({\mathbb P}^n_{++}, \langle \cdot , \cdot \rangle)$. 
\end{example}
\section{Numerical Experiments}\label{Sec:NumExp}
In this section,  we present some numerical experiments to illustrate the behavior of the Riemannian gradient method for minimizing convex functions onto the positive orthant and the cone of symmetric positive definite matrices. We implemented Algorithm~\ref{sec:gradient} with Strategies~1, 2 and 3, and tested it on the examples of Section~\ref{Sec:Examples}. Additionally, we consider the application of the method to compute the Riemannian center of mass, which is a specific instance of a geometric mean for points in a Riemannian manifold. In due course, we will describe this problem in more detail.

For the positive orthant, the {\it exponential mapping} $\exp_x:T_x\mathcal{M}\to \mathcal{M}$ in the Riemannian manifold ${\mathcal M}\coloneqq(\mathbb{R}_{++}^n,G)$ is assigned by 
\begin{equation} \label{eq:ExponentialExPO}
\exp_x(v) = \left(x_1e^{\frac{v_1}{x_1}},\ldots,x_ne^{\frac{v_n}{x_n}}\right),
\end{equation}
for each  $v:=(v_1,\ldots,v_n)^{T}\in\mathbb{R}^{n\times 1}$ and $x\coloneqq (x_1, \ldots, x_n)^{T}\in \mathbb{R}_{++}^n$,  see~\cite{NesterovTodd2002}. By using the gradient in   \eqref{eq:GradExPO} and the definition of metric  we obtain
$$
\left\|\mbox{grad} f(x)\right\|^{2}=  \mbox{grad} f(x)^TG(x)\mbox{grad} f(x) = \sum_{i=1}^n\left[x_i \frac{\partial f}{\partial x_i}(x)\right]^2,
$$
for each  $x\coloneqq(x_1,\ldots,x_n)\in{\mathcal M}$. Considering the cone of symmetric positive definite matrices, the {\it exponential mapping} $\exp_X:T_X\mathcal{M}\to \mathcal{M}$ in the Riemannian manifold $\mathcal{M}\coloneqq ({\mathbb P}^n_{++}, \langle \cdot , \cdot \rangle)$,  is given by
\begin{equation} \label{eq:GoedSPD}
\exp_X(V)=X^{1/2}e^{\left(X^{-1/2}VX^{-1/2}\right)}X^{1/2}, 
\end{equation}
for each $V\in {\mathbb P}^n$ and $X\in {\mathbb P}^n_{++}$. By using \eqref{eq:Grad},  we have   $\left\|\mbox{grad} f(X)\right\|^2= \mbox{tr} \left( \left[Xf'(X)\right]^2\right),$
for each $X\in \mathcal{M}$. In both cases, although~\eqref{eq:OptP} is a constrained optimization problem, by~\eqref{eq:ExponentialExPO} and \eqref{eq:GoedSPD}, Algorithm~\ref{sec:gradient} generates only feasible points without using projections or any other procedure to remain the feasibility. Hence, problem~\eqref{eq:OptP} can be seen as unconstrained Riemannian optimization problem.

Our numerical experience indicates that it is advantageous to perform a reasonably stringent line search. Therefore, we used $\beta=1/2$ for Strategies~2 and 3. Additionally, we set $L_0=1$ and $\eta=2$ for Strategy~2. We stopped the execution of Algorithm~\ref{sec:gradient} at $p_k$ declaring convergence if
$$\|f^{\prime}(p_k)\|_{\infty}\leq 10^{-5}.$$
Since, by~\eqref{eq:GradExPO} and~\eqref{eq:Grad}, $\mbox{grad} f(p_k)=0$ if only if $f^{\prime}(p_k)=0$, this is a reasonable stopping criterion. The maximum number of allowed iterations was set to 1000. Codes are written in Matlab and are freely available at \url{https://orizon.mat.ufg.br/}.

\subsection{Academic problems} \label{Sec:NumExpAcademic}

We begin the numerical experiments by testing the Riemannian gradient method on the problems of minimizing the functions of the examples in Sections~\ref{Sec:Examples}. We call these problems by Problem~1, 2, 3 and 4, respectively.

\subsubsection{Academic problems in the positive orthant} \label{Sec:NumExpPO}

In this section, we compare the performance of the Riemannian with the Euclidian gradient methods on Problems~1 and~2. We considered Algorithm~\ref{sec:gradient} with Strategy~3 and implemented the Euclidian gradient method also using the Armijo rule with the same algorithmic parameters. It is worth mentioning that, in principle, the Euclidian method can generate iterates out of the positive orthant. Thus, in order to keep the feasibility, in each iteration we simply determine the maximum step size to remain within the feasible set and perform a convenient linear search by shrinking the step size until the Armijo condition is satisfied.

We generated several instances of Problems~1 and~2 by considering functions~\eqref{eq:Log2Log} and~\eqref{eq:LogLog}, respectively, with $n=100$ and different parameters. In all cases, for each $i=1,\ldots,n$, we set  parameters $a_i$ with the same value. Equivalently for parameters $b_i$, $c_i$, and $d_i$.

\vspace{0.5cm}
\noindent{\bf Problem 1.} First, parameters $a_i$, $b_i$, and $d_i$ were randomly generated between $0$ and $10$. Then, in order to guarantee that $c_i> a_i$, we randomly generated parameters $c_i$ between $1.1 a_i$ and $5.0 a_i$. All problems were solved 100 times using starting points from a uniform random distribution inside the box $[0, \; 20]^n$. For each method, Table~\ref{tab:prob1} informs the percentage of runs that has reached a critical point ($\%$), the average numbers of iterations (it) and functions evaluations (nfev) of the successful runs.

 \begin{table}[htb!] 
  {\footnotesize
  \centering
\begin{tabular}{|c|cccc|ccc|ccc|} \hhline{~~~~~*6{-}|}  
  \multicolumn{5}{c|}{}& \multicolumn{3}{c|}{\cellcolor[gray]{0.9} Riemannian} & \multicolumn{3}{c|}{\cellcolor[gray]{0.9} Euclidian}\\ 
  \multicolumn{5}{c|}{}& \multicolumn{3}{c|}{\cellcolor[gray]{0.9} Gradient method} & \multicolumn{3}{c|}{\cellcolor[gray]{0.9} Gradient method}\\ \hline \rowcolor[gray]{0.9}
$\#$ &$a_i$ & $b_i$ & $c_i$ & $d_i$ & \% & it & nfev  & \% & it & nfev  \\ \hline 
1  & 3.77 &  8.17 & 11.10 &  5.92 & 100.0 &  14.1 &   85.5 & 100.0 &  72.3  &  255.4  \\ \hline 
2  & 7.88 &  5.49 & 17.95 &  3.01 & 100.0 &  21.1 &  148.5 & 100.0 &  56.9  &  208.2  \\ \hline 
3  & 8.96 &  1.72 & 42.11 &  7.18 & 100.0 &  17.0 &  137.1 & 100.0 &  56.0  &  203.3  \\ \hline 
4 & 3.14 &  1.30 & 13.77 &  9.32 &  100.0 &  9.0 &   55.0 &  100.0 & 76.3  &  232.6  \\ \hline 
5  & 5.49 &  1.72 &  6.82 &  0.83 & 100.0 &  10.0 &   51.0 & 100.0 &  65.1  &  227.3  \\ \hline 
6  & 4.59 &  4.25 & 13.31 &  8.11 & 100.0 &  11.0 &   67.0 & 100.0 &  71.2  &  228.5  \\ \hline 
7  & 2.10 &  3.80 &  4.31 &  0.10 & 100.0 &  21.2 &  107.0 & 100.0 &  54.1  &  184.7  \\ \hline 
8  & 8.69 &  7.47 & 28.54 &  4.77 & 100.0 &  8.0 &   57.1 &  100.0 & 61.1  &  255.3  \\ \hline 
9  & 9.85 &  2.24 & 44.60 &  0.57 & 100.0 &  16.0 &  129.0 & 100.0 &  52.0  &  201.9  \\ \hline 
10 & 2.60 &  1.71 &  9.65 &  2.07 & 100.0 &  18.0 &  109.2 & 100.0 &  57.1  &  185.6  \\ \hline 
11 & 6.03 &  1.40 & 13.57 &  8.94 & 100.0 &   9.0 &   55.0 & 100.0 &  79.2  &  238.1  \\ \hline 
12 & 5.71 &  4.99 &  9.37 &  3.22 & 100.0 &  20.1 &  121.7 & 100.0 &  59.2  &  191.2  \\ \hline 
13 & 1.38 &  6.07 &  6.78 &  4.86 & 100.0 &   9.0 &   46.1 & 100.0 &  73.5  &  219.6  \\ \hline 
14 & 2.22 &  0.24 &  5.58 &  9.04 & 100.0 &  14.0 &   71.0 & 100.0 &  141.8 &  408.6  \\ \hline 
15 & 4.19 &  6.24 &  7.73 &  9.48 & 100.0 &   7.0 &   36.0 & 100.0 &  105.8 &  315.4  \\ \hline 
16 & 8.27 &  2.42 & 10.96 &  3.02 & 100.0 &  17.0 &  103.0 & 100.0 &  66.3  &  237.4  \\ \hline 
17 & 4.72 &  0.64 & 19.35 &  0.62 & 100.0 &  18.0 &  127.0 & 100.0 &  55.6  &  204.1  \\ \hline 
18 & 2.99 &  1.63 & 11.15 &  6.44 & 100.0 &  14.0 &   85.1 & 100.0 &  75.8  &  250.8  \\ \hline 
\end{tabular}
\caption{Parameters of function~\eqref{eq:Log2Log} as well as the performance of the Riemannian and Euclidian gradient methods.}
\label{tab:prob1}}
 \end{table} 
 
As can be seen, the Riemannian gradient method is clearly more efficient than the Euclidian gradient method in this set of problems. In {\it all} 18 problem instances considered, the Riemannian version required fewer iterations and function evaluations. Overall, on average, the Riemannian gradient method performed $19.8\%$ of iterations and $37.5\%$ of function evaluations required by the Euclidian method.


Figure~\ref{fig:prob1}~(a) shows a typical behavior of the methods on Problem~1. This case corresponds to $n=2$, $a_i=1$, $b_i=c_i=d_i=2$ for $i=1,2$, and the initial point $p_0=[5, \; 1]^T$. The stopping criterion was satisfied with 4 and 14 iterations for the Riemannian and Euclidian gradient methods, respectively. The {\it zig-zag} path of the Euclidian gradient method can be seen clearly. In contrast, the Riemannian method rapidly approaches the minimizer. In Figure~\ref{fig:prob1}~(b), the sup-norm of the euclidean gradient is displayed as a function of the iteration number, which clearly shows the distinction between the methods. While the Euclidian method required 10 iterations for $\|f^{\prime}(p_k)\|_{\infty}$ to reach order of $10^{-2}$, the Riemannian algorihtm required only 2 iterations. 

 \begin{figure}[h!]
\begin{minipage}[b]{0.50\linewidth}
\begin{figure}[H]
	\centering
		\includegraphics[scale=0.5]{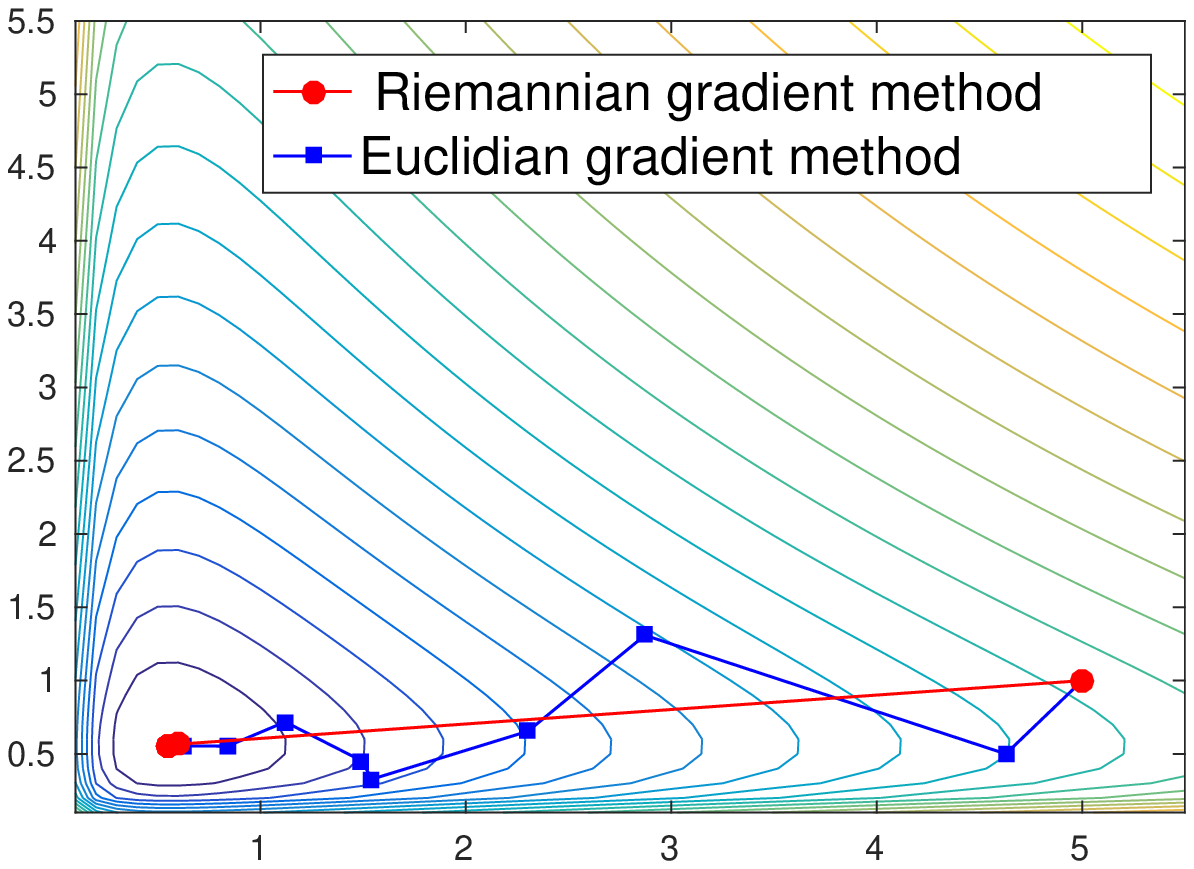}\\
	\footnotesize	(a)
\end{figure}

\end{minipage} \hfill
\begin{minipage}[b]{0.50\linewidth}

\begin{figure}[H]
	\centering
		\includegraphics[scale=0.5]{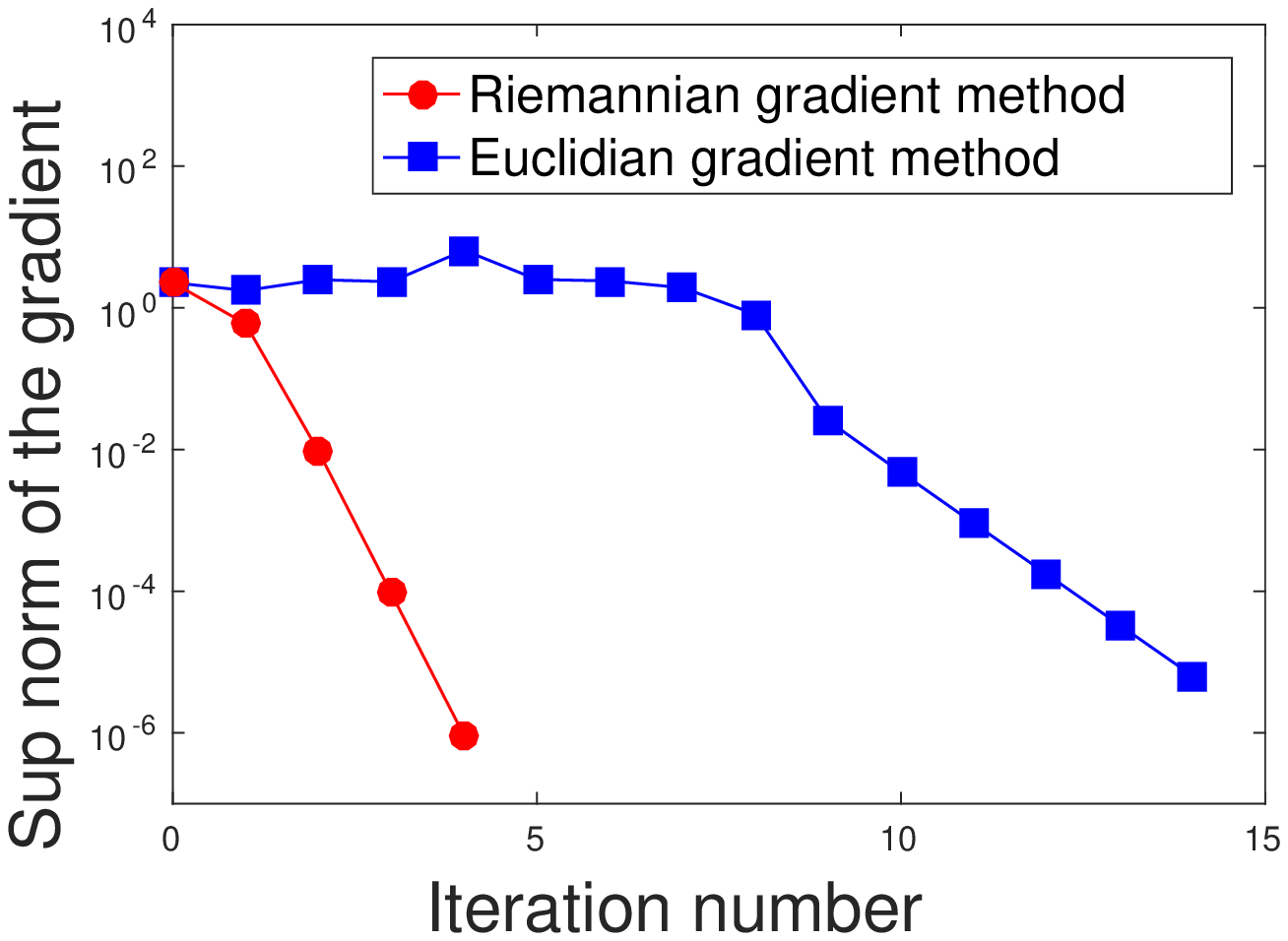}\\
	\footnotesize	(b)
\end{figure}
\end{minipage}\hfill
\caption{(a) A typical behavior of the Riemannian and the Euclidian gradient methods for which the {\it zigzag} pattern appears for the Euclidian algorihtm. (b) Sup-norm of the euclidean gradients per iteration.}
\label{fig:prob1}
\end{figure}

\vspace{0.5cm}
\noindent{\bf Problem 2.} We tested the algorithms on a set of 100 instances of Problem~2. We randomly generated parameters $a_i$ and $b_i$ between $0$ and $10$, parameters $d_i$ between $2$ and $10$, and a constant $\mu_i$ belonging to the interval $(0, \; 1)$. Then, we set $c_i = \mu_i a_i d_i$, fulfilling the conditions  $c_i< a_id_i$ and $d_i\geq2$, for all $i=1,\ldots,n$. As for Problem~1, each instance was solved 100 times using starting points from a uniform random distribution inside the box $[0,  \; 20]^n$. The results are given in the following form: for each problem instance, Figure~\ref{fig:prob2}~(a) informs the average number of iterations, and Figure~\ref{fig:prob2}~(b) informs the average number of functions evaluations. As a matter of aesthetics, the graphs are independent and were organized in an increasing way. 

 \begin{figure}[h!]
\begin{minipage}[b]{0.50\linewidth}
\begin{figure}[H]
	\centering
		\includegraphics[scale=0.5]{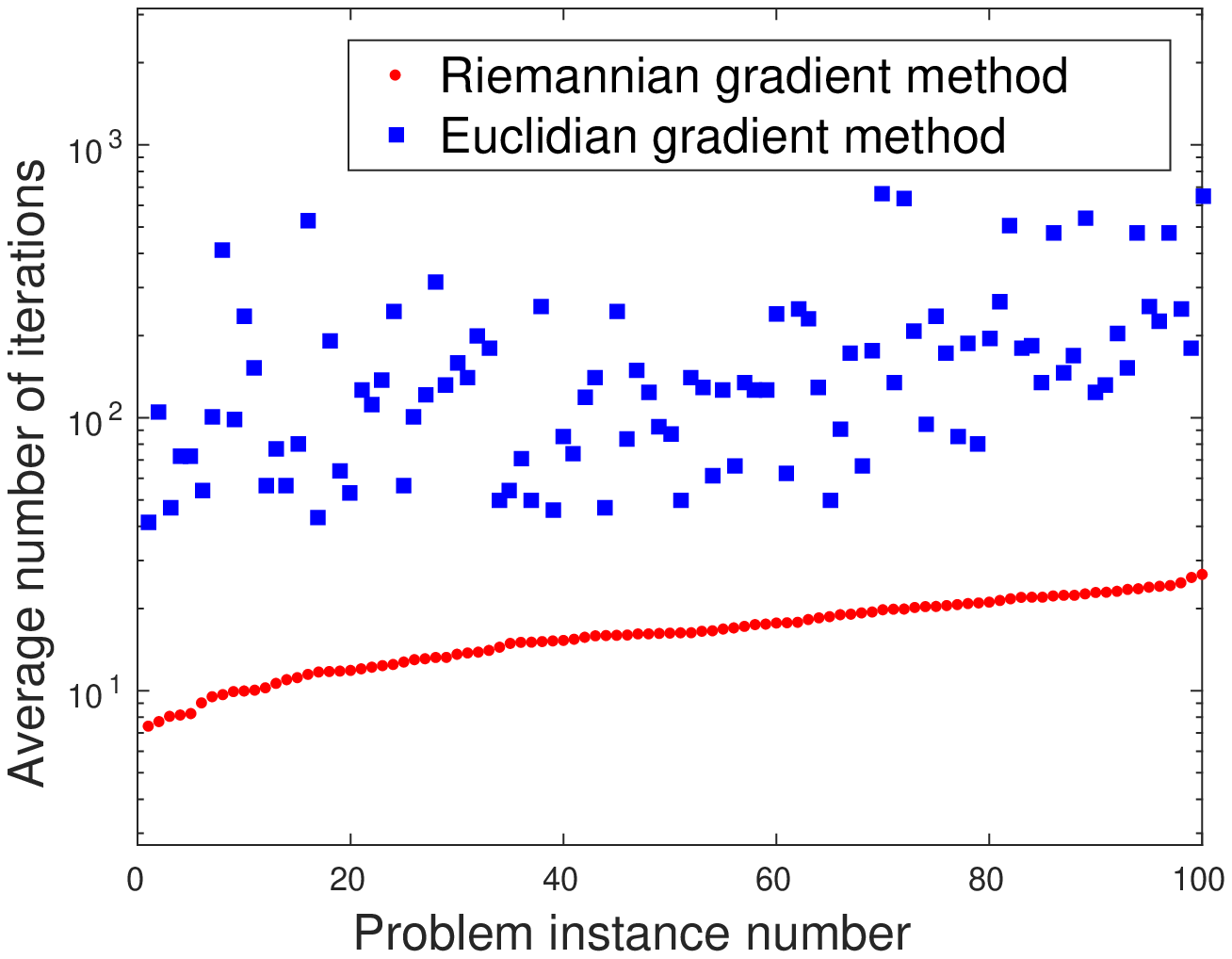}\\
	\footnotesize	(a)
\end{figure}

\end{minipage} \hfill
\begin{minipage}[b]{0.50\linewidth}

\begin{figure}[H]
	\centering
		\includegraphics[scale=0.5]{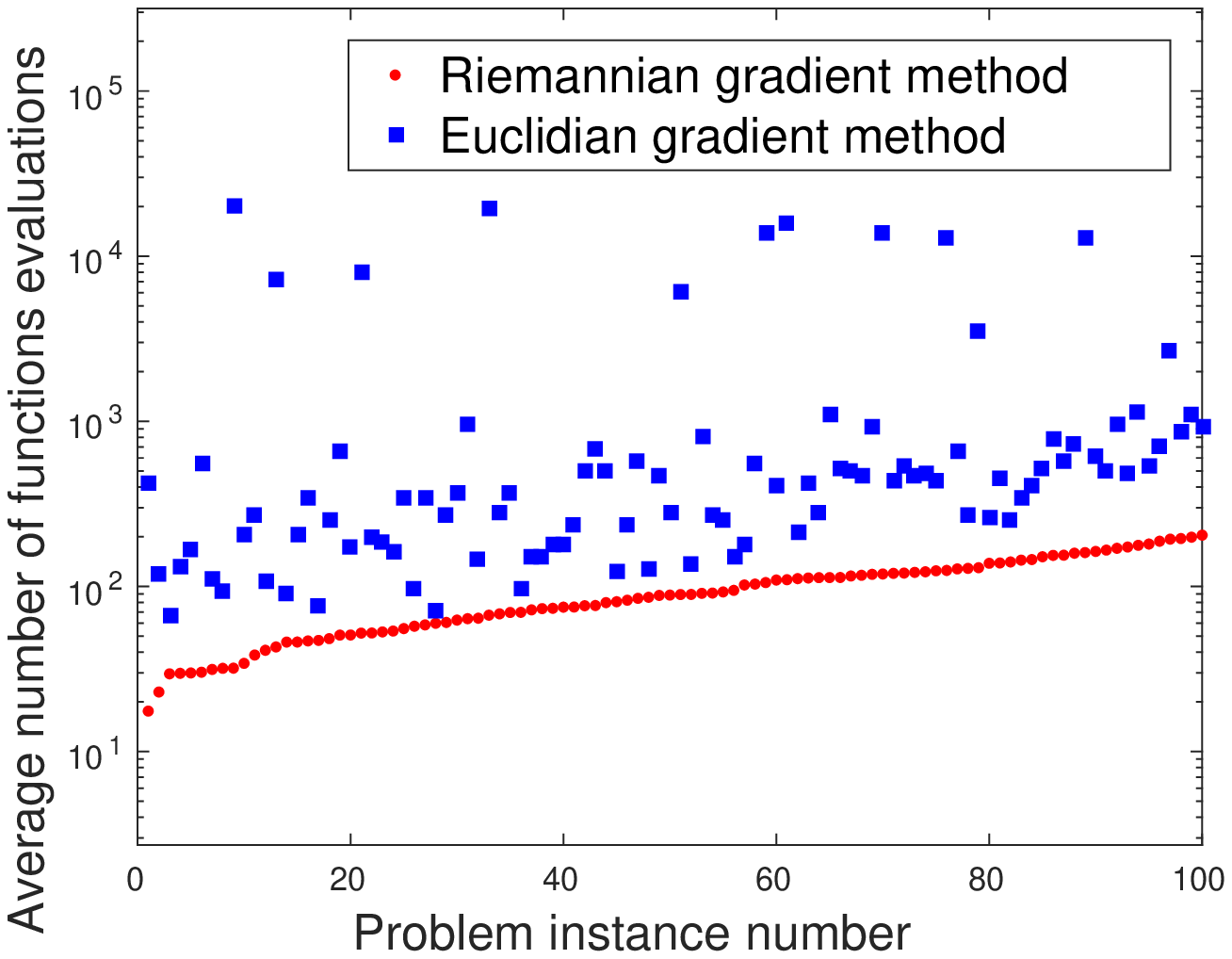}\\
	\footnotesize	(b)
\end{figure}
\end{minipage}\hfill
\caption{(a) Average number of iterations and (b) average number of functions evaluations required for each of 100 instances of Problem~2 for the Riemannian and the Euclidian gradient methods.}
\label{fig:prob2}
\end{figure}

Figure~\ref{fig:prob2} shows that the Riemannian gradient method required fewer iterations and function evaluations than the Euclidian gradient method in {\it all} problem instances. In terms of percentages, on average, the Riemannian algorihtm performed $9.7\%$ and $5.6\%$ of the number of iterations and functions evaluations required by the Euclidian algorithm, respectively.

The results of this section allow us to conclude that there are problems for which the introduction of a suitable metric makes it possible to explore its geometric and algebraic structures, resulting in a large reduction in the computational cost of obtaining its solution. In fact, by introducing a suitable  Riemannian metric, a constrained optimization problem with non-convex objective function and non-Lipschitz gradient can be transformed into an  optimization problem with convex objective function and Lipschitz gradient. 



\subsubsection{Academic problems in the SPD matrices cone}\label{Sec:NumExpSDP}

In this section we illustrate the practical applicability of the Riemannian gradient method for minimizing convex functions onto the cone of symmetric positive definite matrices. 
We used Problem~3 to test the Riemannian gradient method varying the dimension and the domain of the starting points, while Problem~4 was used to compare the different linear search strategies. For Problem~3, we adopted Strategy~3.

\vspace{0.5cm}
\noindent{\bf Problem 3.} We set $a=b=1$ in function~\eqref{eq:fpdm4}. In the first set of tests, we assigned the following values to the dimension: $n=10$, $20$, $50$, $100$, and $150$. For each specific value of $n$, we run the Riemannian gradient method 100 times using random starting points with eigenvalues belonging to the interval $(0,  \; 20)$. In the second set of tests, we set $n=50$ and varied the interval that contains the eigenvalues of the starting points. Again, for each combination, the method was run 100 times using random starting points. The results for the first and second set of tests are in Table~\ref{tab:prob3}~(a) and~(b), respectively. First column of Table~\ref{tab:prob3}~(a) informs the considered dimension, while 
the first column of Table~\ref{tab:prob3}~(b) contains the interval for the eigenvalues of the starting points. Columns ``$\%$'', ``it'', and ``nfev'' are as in Table~\ref{tab:prob1}.


\begin{table}[htb!] 
{\footnotesize
 \centering 
 \begin{minipage}[b]{0.50\linewidth}
 \centering 
\begin{tabular}{|c|c|c|c|} \hline \rowcolor[gray]{0.9}
 $n$ & \% & it & nfev  \\ \hline 
10 & 100.0 &  18.2 &  110.2  \\ \hline 
20 & 100.0 &  19.9 &  140.4  \\ \hline 
50 & 100.0 &  14.2 &  114.9  \\ \hline 
100 & 100.0 &  15.2 &  138.2  \\ \hline 
150 & 100.0 &  27.1 &  271.5  \\ \hline 
\end{tabular} \\
(a)
\end{minipage}\hfill
\begin{minipage}[b]{0.50\linewidth}
\centering 
\begin{tabular}{|c|c|c|c|} \hline  \rowcolor[gray]{0.9}
 $\lambda_i(X_0)$ & \% & it & nfev  \\ \hline 
$(0 \; 10)$  & 98.0 &  14.2 &  114.6  \\ \hline 
$(0 \; 100)$  & 99.0 &  14.6 &  117.6  \\ \hline 
$(0 \; 500)$  & 99.0 &  15.0 &  121.0  \\ \hline 
$(0 \; 1000)$  & 100.0 &  15.1 &  121.6  \\ \hline 
$(0 \; 2000)$  & 100.0 &  15.2 &  122.4  \\ \hline 
\end{tabular} \\
(b)
\end{minipage}\hfill
\caption{Performance of the Riemannian gradient method in Problem~3 varying: (a) the dimension; (b) the domain of the starting points.}
\label{tab:prob3}}
 \end{table} 
 
 
 The highlight of Table~\ref{tab:prob3} is that the Riemannian gradient method was robust with respect to the dimension and to the choice of the starting points. Furthermore, except for the case where $n = 150$, it was not very sensitive to the variation of the dimension or to the domain of the starting points.
 
For comparative purposes, we implemented and tested the Euclidean method in Problem~3. For $n=5$ (respectively, $n=10$), 15 (respectively, 96) out of the 100 considered starting points resulted in an iteration history that reached the maximum number of iterations allowed. Finally, we observe that, by using~\eqref{eq:GoedSPD} and the function~\eqref{eq:fpdm4}, the Riemannian and the Euclidian gradient iteration becomes, respectively,
$$X_{k+1}=\left[ \det(X_k)^{2a}  e^{b} \right]^{-t_k}X_{k} \qquad k=0,1, \ldots,$$
and 
$$X_{k+1}=X_{k}-t_k\left[2a\ln(\det(X_k))-b\right]X_k^{-1}, \qquad k=0,1, \ldots,$$
where the steep-size $t_k>0$  is computed  according to the adopted line search strategy. Thus, we can see that the Riemannian gradient iterations are simpler and have a lower computational cost to be performed.

\vspace{0.5cm}
\noindent{\bf Problem 4.} We set $n=100$, $a=b_1=b_2=1$ and $c=0.5$ in function~\eqref{eq:fpdm}, fulfilling $c<a b_1$. We tested the Riemannian gradient method with each of the three strategies by running each combination 100 times using random starting points with eigenvalues belonging to the interval $(0, \; 20)$. The results in Table~\ref{tab:prob4} are given as in the previous tables.

\begin{table}[htb!] 
{\footnotesize
 \centering 
\begin{tabular}{|c|c|c|c|c|c|c|c|c|} \hline 
 \multicolumn{3}{|c|}{\cellcolor[gray]{.9} Strategy 1} & \multicolumn{3}{c|}{\cellcolor[gray]{.9} Strategy 2} & \multicolumn{3}{c|}{\cellcolor[gray]{.9} Strategy 3} \\ \hline \rowcolor[gray]{.9} 
 \% & it & nfev & \% & it & nfev & \% & it & nfev \\ \hline 
100.0 & 452.5 &  453.5  & 99.0 &  15.3 &   21.3 & 100.0 &  15.3 &   70.9 \\ \hline 
\end{tabular} 
\caption{Performance of the Riemannian gradient method with the different line search strategies.}
\label{tab:prob4}}
 \end{table} 

For Strategy~1, since the Lipschitz gradient constant satisfies $L<ab_1^2n$, we used the Lipschitz stepsize $t_k=1/(ab_1^2n)<1/L$, for all $k=1,2,\ldots$. Overall, as can be seen in Table~\ref{tab:prob4}, the Riemannian method with Lipschitz stepsizes is clearly the least efficient, requiring an exceedingly large number of iterations. In this case the method performs one function evaluation per iteration. The poor performance is due to the short stepsizes in all iterations. On the other hand, we point out that the efficiency of the Riemannian gradient method with Lipschitz stepsize is closely related to an accurate estimate of the Lipschitz gradient constant.
 
%

Remark~\ref{rem:strategy3} helps to explain the results of Table~\ref{tab:prob4} for Strategies~2 and 3. Regardless of the starting point, Algorithm~\ref{sec:gradient} with both strategies performed exactly the same number of iterations. Additionally, in a typical run, the stepsizes were non-increasing. Therefore, overall, by Remark~\ref{rem:strategy3}, the  adaptive scheme in Strategy~2 required fewer function evaluations per iteration then the Armijo line search of Strategy~3.

Despite the simple linesearch mechanisms employed here, the numerical results indicate that, as it has to be expected, the efficient implementation of linear search algorithms can significantly improve the Riemannian gradient method.


\subsection{The Riemannian  center  of  mass}\label{Sec:NumExpKM}
The  Riemannian center of mass and so called Karcher mean is a specific instance of a geometric mean for points in Riemannian manifolds. It has several practical applications and has appeared in many papers, we refer the reader to \cite{BiniIannazzo2013, JeurisVandebrilVandereycken2012, SraHosseini2015} and the references therein. 

\subsubsection{The  center  of  mass  on the SPD matrices cone}\label{Sec:NumExpKM}

 Denotes by  $\left\|\cdot\right\|_F$ the Frobenius norm associated to the  inner product  $\langle U,V \rangle_F\coloneqq \emph{tr} (VU)$, for all $U,V\in {\mathbb P}^n_{++}$. Let   $d$ be the Riemannian distance defined in $\mathcal{M}\coloneqq ({\mathbb P}^n_{++}, \langle \cdot , \cdot \rangle)$, i.e.,  
\begin{equation}\label{eq:KarcherMeanSDP}
d(A, X)=\left\|\ln\left(X^{-1/2}AX^{-1/2}\right)\right\|_F, \qquad A,X\in {\mathbb P}^n_{++},
\end{equation}
 see \cite{NesterovTodd2002}.  The Karcher mean   of $m$  positive definite matrices  $A_1,\ldots,A_m \in {\mathbb P}^n_{++}$ is the unique  solution of the optimization  problem 
\begin{equation} \label{eq:CMProbSDP}
\min \left\{ f(x):=\frac{1}{2}\sum_{j=1}^{m} \left\|\ln\left(X^{-1/2}A_iX^{-1/2}\right)\right\|_F^2 ~:~  X\in {\mathbb P}^n_{++}\right\}.
\end{equation}
 Indeed,  $f$ is a strong  convex  function in  $\mathcal{M}$ due to the square of the distance \eqref{eq:KarcherMeanSDP} be    strongly   convex in  $\mathcal{M}$, see for example \cite{CruzNetoFerreiraLucambio2002}.  Since $f$ is a strong  convex  function, all sub-level sets of $f$ are bounded. As a consequence, $f$  has  Riemannian  Lipschitz  gradient on each sublevel set of $f$. Finally, we remark that  \eqref{eq:KarcherMeanSDP} is not an  Euclidean  convex  function. 
 By   \cite{JeurisVandebrilVandereycken2012} and using \eqref{eq:Grad},  we conclude that
\begin{equation}\label{eq:GEMeanKarcher}
\grad f(X)=\sum_{i=1}^{m}X^{1/2}\ln\left(X^{1/2}A_{i}^{-1}X^{1/2}\right)X^{1/2}.
\end{equation}
 Thus, by using~\eqref{eq:GoedSPD}  and~\eqref{eq:GEMeanKarcher},  the Riemannian gradient iteration for  solving~\eqref{eq:CMProbSDP}  is
$$X_{k+1}=X_k^{1/2}e^{-t_k\sum_{i=1}^{n}\ln\left(X_k^{1/2}A_{i}^{-1}X_k^{1/2}\right)}X_k^{1/2},  \qquad k=0, 1, \ldots.$$
see, for example, \cite{ZhangSra2016}.

In~\cite{AfsariTronVidal2013}, Afsari {\it et al.} studied the convergence of the Riemannian gradient method with a Lipschitz stepsize for the center of mass problem in a manifold with curvature bounded from above and below. The stepsize is defined from a local estimate for the Lipschitz gradient constant. Consider problem~\eqref{eq:CMProbSDP}, and let $r>0$ be such that $A_1,\ldots,A_m \subset B(X_0,r)$, where $B(X_0,r)$ is the open ball with center $X_0$ and radius $r$. They showed that it is possible to achieve convergence with $t_k=t$ for all $k = 0,1,\ldots$,  where $t\in (0,2\bar{t})$ and
\begin{equation}\label{tbar}
\bar{t}=\frac{1}{4 r \coth(4 r)}.
\end{equation}
Recently, Bento {\it et al.}~\cite{glaydstontr} extended the convergence of the gradient method to the Hadamard setting for continuously differentiable functions which satisfy the Kurdyka-Lojasiewicz inequality. In particular, they proposed a Riemannian gradient method with Armijo line search for problem~\eqref{eq:CMProbSDP}. Basically, their proposal coincides with Algorithm~\ref{sec:gradient} with Strategy~3. 

We tested Algorithm~\ref{sec:gradient} with each strategy on a set of 200 randomly generated problems~\eqref{eq:CMProbSDP} with $n=200$ and $m=5$, $10$, $20$ or $50$. For each value of $m$ we considered 50 problem instances. Let us clarify how a matrix $A$ was defined. First, we randomly generated an orthonormal matrix $U$ and a diagonal matrix $D$ with elements belonging to $(0,100)$. Then, we set $A=UDU^T$ ensuring that $A\in {\mathbb P}^n_{++}$. Given a problem instance with data $A_1,\ldots,A_m \in {\mathbb P}^n_{++}$, we defined the starting point $X_0$ as the {\it explog} geometric mean given by
$$X_0 \coloneqq \exp\left( \frac{1}{m}\sum_{i=1}^m \ln(A_i)\right),$$
see, for example, \cite{ANDO2004305}. For Strategy~1 the Lipschitz stepsize $t$ was defined according to~\cite{AfsariTronVidal2013}. We set $t=1.99\bar{t}$, where $\bar{t}$ is given by~\eqref{tbar}. Radius $r$ can be calculated by computing the maximum distance of $X_0$ to each matrix $A_i$, $i=1,\ldots,m$. Numerical comparisons are reported in Figure~\ref{fig:prob6} using performance profiles~\cite{pprofile}. We adopted the number of functions evaluations and CPU time as performance measurements. 

 \begin{figure}[h!]
\begin{minipage}[b]{0.50\linewidth}
\begin{figure}[H]
	\centering
		\includegraphics[scale=0.5]{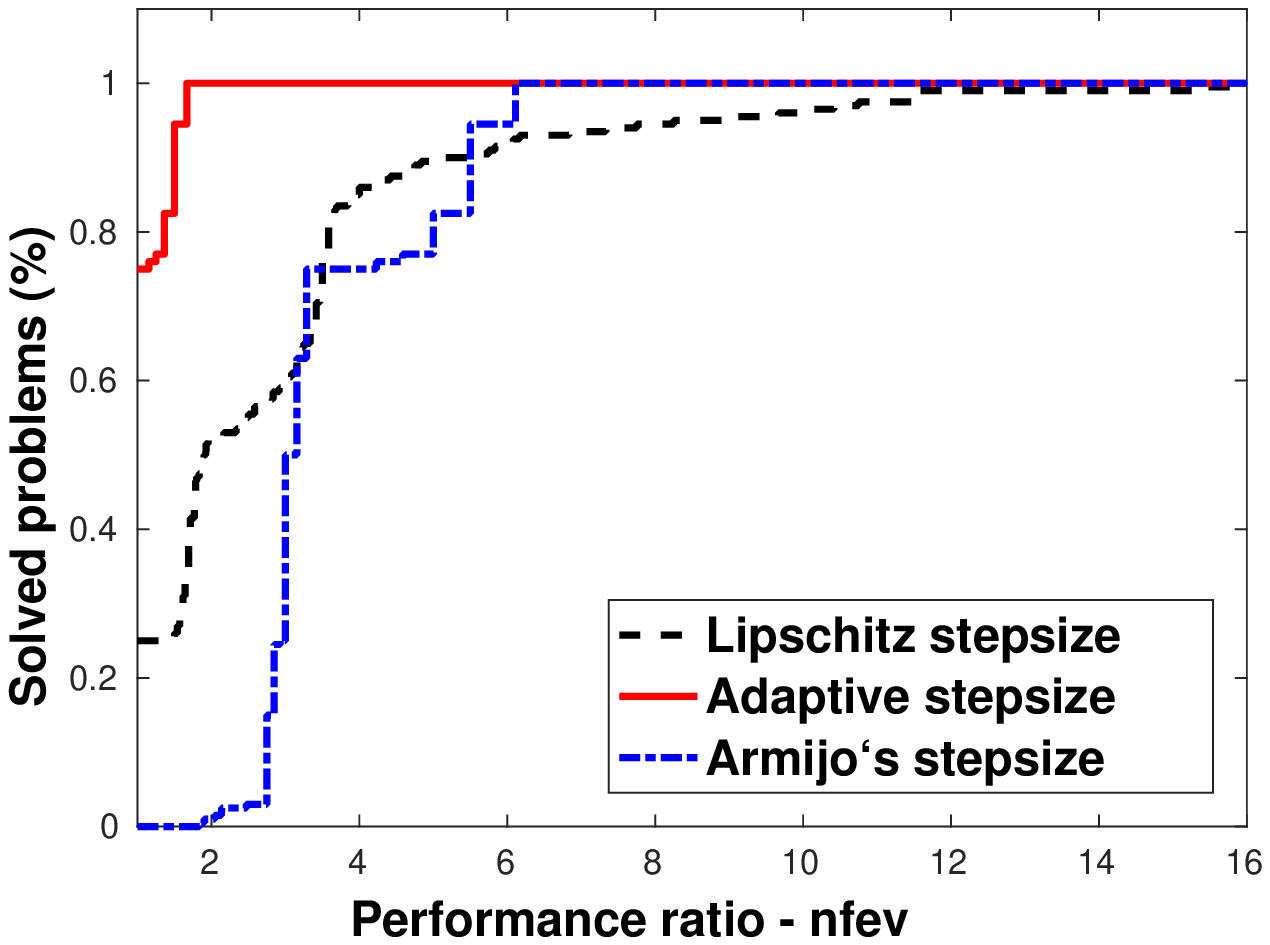}\\
	\footnotesize	(a)
\end{figure}

\end{minipage} \hfill
\begin{minipage}[b]{0.50\linewidth}

\begin{figure}[H]
	\centering
		\includegraphics[scale=0.5]{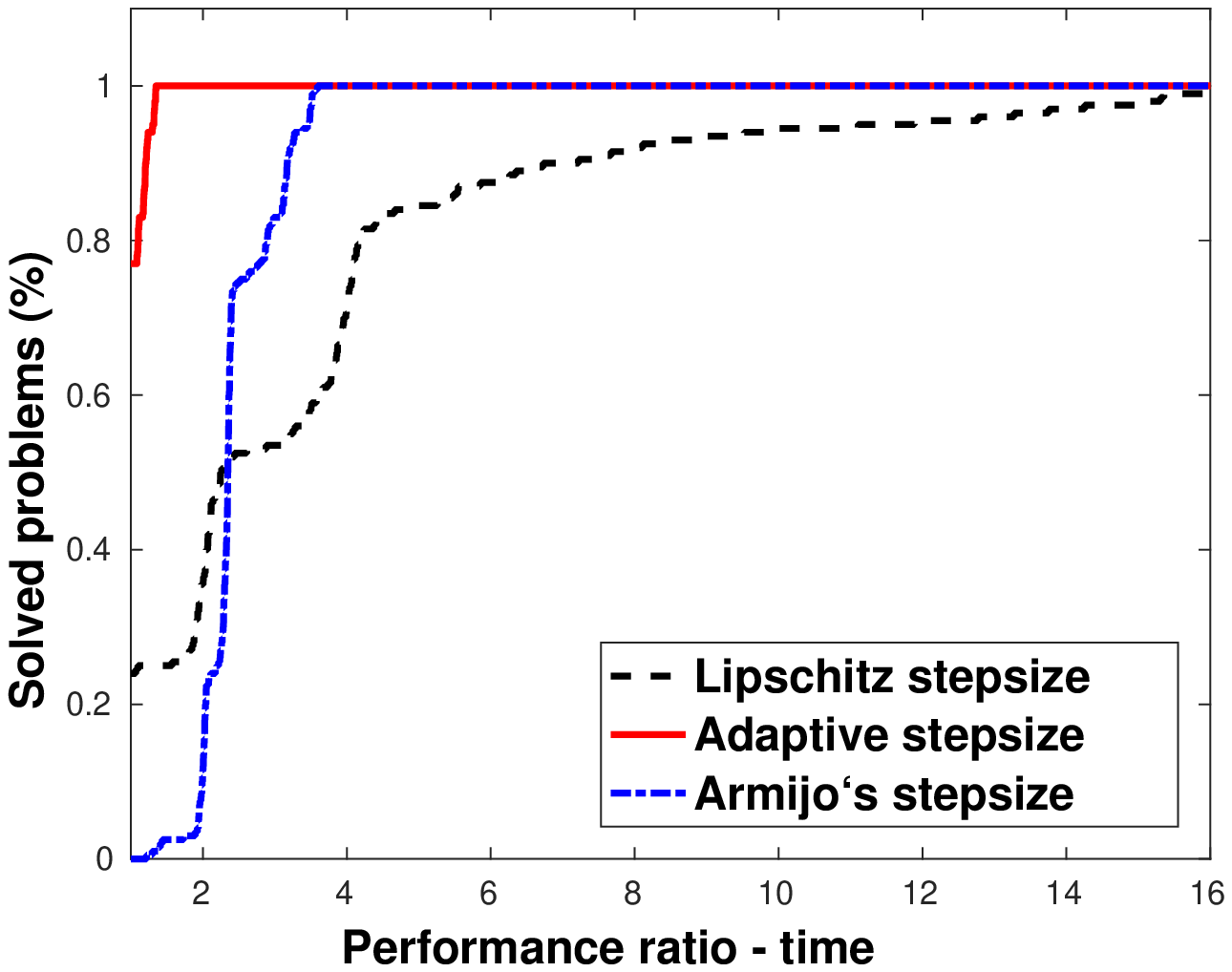}\\
	\footnotesize	(b)
\end{figure}
\end{minipage}\hfill
\caption{Performance profile comparing the Riemannian gradient method with different line search strategies using as performance measurement: (a) number of function evaluations; (b) CPU time.}
\label{fig:prob6}
\end{figure}

As can be seen, Algorithm~\ref{sec:gradient} with Strategy~2 is the most efficient on the chosen set of test problems. Efficiencies of the methods are $25.0\%$ (respectively, $24.0\%$), $75.0\%$ (respectively, $76.0\%$), and $0.0\%$ (respectively, $0.0\%$) respectively, considering the number of function evaluations (respectively, CPU time) as performance measurement. Efficiency of Algorithm~\ref{sec:gradient} with Strategy~3 is $0.0\%$ because Strategy~2 outperformed Strategy~3 in all considered instances. Curiously, $m = 20$ in all problems for which  Strategy~1 was the most efficient. Robustness are $99.5 \%$, $100.0 \%$, and $100.0 \%$ respectively, see Table~\ref{tab:prob6}. Only in a problem instance Algorithm~\ref{sec:gradient} with Strategy~1 reached the maximum number of iterations allowed.

 \begin{table}[htb!] 
  \footnotesize
  \centering
\begin{tabular}{|c|c|c|} \hhline{~*2{-}|}  
\multicolumn{1}{c|}{} & \multicolumn{1}{c|}{\cellcolor[gray]{0.9} Efficiency (nfev -- CPU time) ($\%$) } & \multicolumn{1}{c|}{\cellcolor[gray]{0.9} Robustness ($\%$)} \\ \hline
 Strategy~1   & 25.0  -- 24.0  & 99.5 \\ \hline
 Strategy~2   & 75.0 -- 76.0   & 100.0\\ \hline
 Strategy~3   & 0.0  -- 0.0    & 100.0\\ \hline
\end{tabular}
\label{tab:prob6}
\caption{}
\end{table}

The similarity of the Figures~\ref{fig:prob6}~(a) and~(b) suggests that the number of function evaluations is a good indicator of performance. Indeed, evaluating function $f$ is computationally expensive, since it involves inverting $X$ and computing $m$ matrix logarithms. This implies that linear search schemes must be carefully formulated for the center of mass problem. Overall, the naive implementation of the Armijo line search in Strategy~3 was overcome by the method with Lipschitz stepsize. On the other hand, the results indicate that the adaptive search proposed in Strategy~2 is a promising scheme worth to consider.

\subsubsection{The  center  of  mass  on the positive orthant}\label{Sec:NumExpKM}

 Let    ${\mathcal M}\coloneqq(\mathbb{R}_{++}^n,G)$  be  the Riemannian manifolds defined in Section~\ref{Sec:ExamplesPO} and $d$  the Riemannian distance  associated.  Hence,  we have 
\begin{equation}\label{eq:KarcherMean1}
d^2(y, x)= \sum_{i=1}^n \ln^2\left(\frac{y_i}{x_i}\right), \qquad   y=(y_1, \ldots, y_n), ~ x=(x_1, \ldots, x_n) \in \mathbb{R}_{++}^n.
\end{equation}
The center  of  mass   of $m$ points  $w^1,\ldots,w^m \in {\mathbb R}^n_{++}$ is the unique  solution of the optimization  problem 
\begin{equation} \label{eq:CMProb}
\min \left\{ f(x):=\frac{1}{2}\sum_{j=1}^{m}  d^2(w^j, x) ~:~   x\in \mathbb{R}_{++}^n\right\}.
\end{equation}
Since the square of the distance~\eqref{eq:KarcherMean1} is strongly   convex in  $\mathcal{M}$, then $f$ is a strong  convex  function in  $\mathcal{M}$, see for example \cite{CruzNetoFerreiraLucambio2002}.   
By using  \eqref{eq:GradExPO},   we conclude that  
$$\grad f(x)=\left(x_1 \sum_{j=1}^{m}\ln\left(\frac{x_1}{w^j_{1}}\right),\ldots,x_n \sum_{j=1}^{m}\ln\left(\frac{x_n}{w^j_{n}}\right)\right),$$
where $ x=(x_1, \ldots, x_n) \in \mathbb{R}_{++}^n$. Problem~\eqref{eq:CMProb} has closed solution $x^*=(x_1^*,\ldots,x_n^*) \in \mathbb{R}_{++}^n$ given by
$$x_i^* = \left(\prod_{j=1}^{m}   w_i^j \right)^{\frac{1}{m}},$$
for all $i=1,\ldots,m$. Indeed, direct calculations show that $\grad f(x^*)=0$. 

Due to the closed-form solution, we use problem~\eqref{eq:CMProb} to illustrate the results on iteration-complexity bound of Section~\ref{Sec:IteCompAnalysis}. We consider the Riemannian gradient algorithm with Lipschitz stepsize. Note that the set of positive definite diagonal  matrices  can be identified  with $\mathbb{R}_{++}^n$. Thus, problem~\eqref{eq:CMProb} can be seen as a particular case of problem~\eqref{eq:CMProbSDP} for positive definite diagonal matrices. Given $w^1,\ldots,w^m \in {\mathbb R}^n_{++}$ and defining $A_i=\mbox{diag}(w^i)$ for all $i=1,\ldots,m$, we defined the Lipschitz stepsize as in Section~\ref{Sec:NumExpKM}.

We set $n=100$, $m=5$ and randomly generated the elements of $w^1,\ldots,w^m$ and initial point $x_0$ from a uniform distribution on $(0, 100)$. The computed Lipschitz stepsize was set to $t \approx 0.06$. The Riemannian gradient algorithm stopped declaring ``solution was found'' with 30 iterations. Figures~\ref{fig:prob5}~(a) and~(b) report the function values of the left and right hand sides of inequalities~\eqref{eq:complexity} and~\eqref{eq:complexity2}, respectively.

 \begin{figure}[h!]
\begin{minipage}[b]{0.50\linewidth}
\begin{figure}[H]
	\centering
		\includegraphics[scale=0.50]{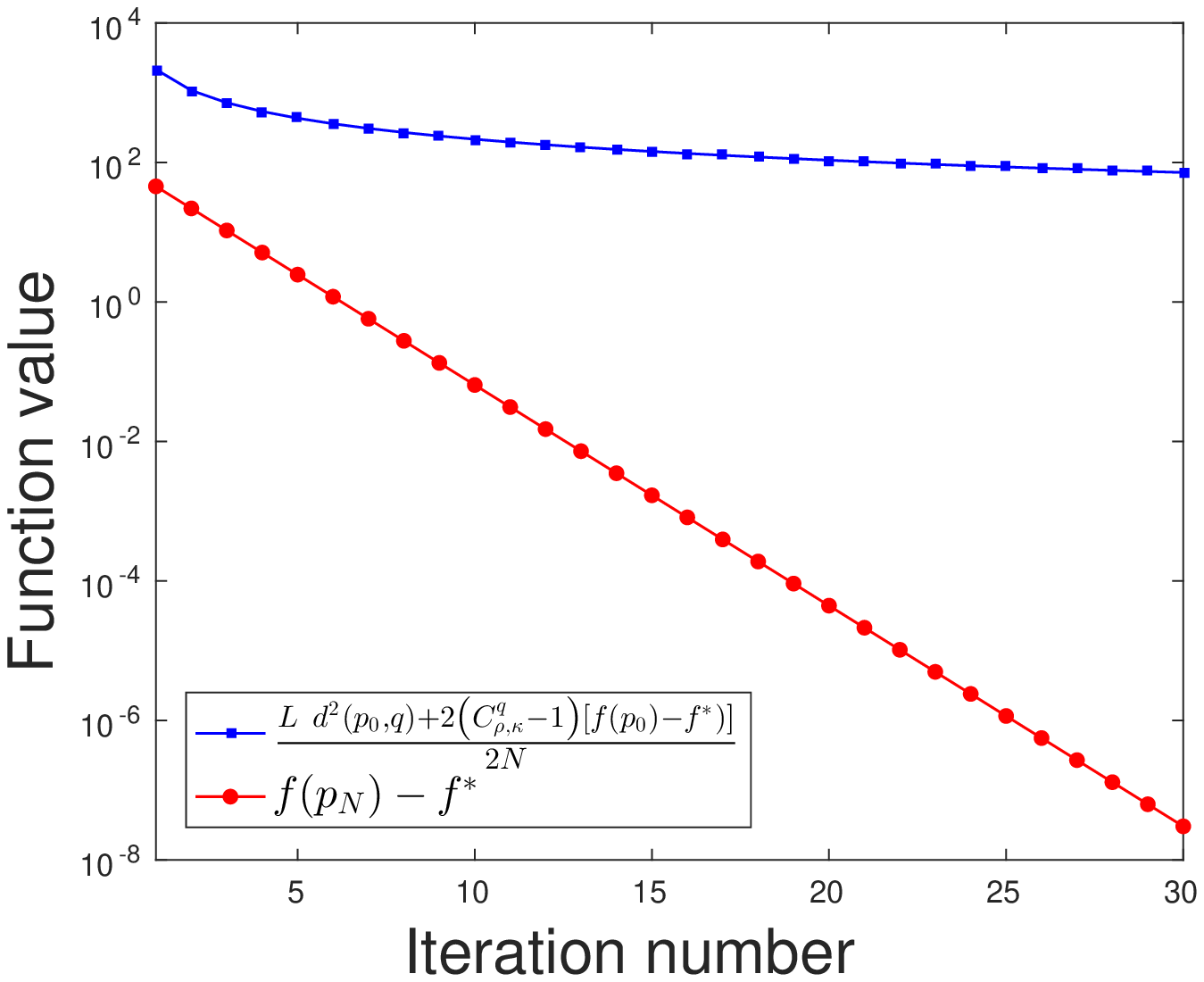}\\
	\footnotesize	(a)
\end{figure}

\end{minipage} \hfill
\begin{minipage}[b]{0.50\linewidth}

\begin{figure}[H]
	\centering
		\includegraphics[scale=0.50]{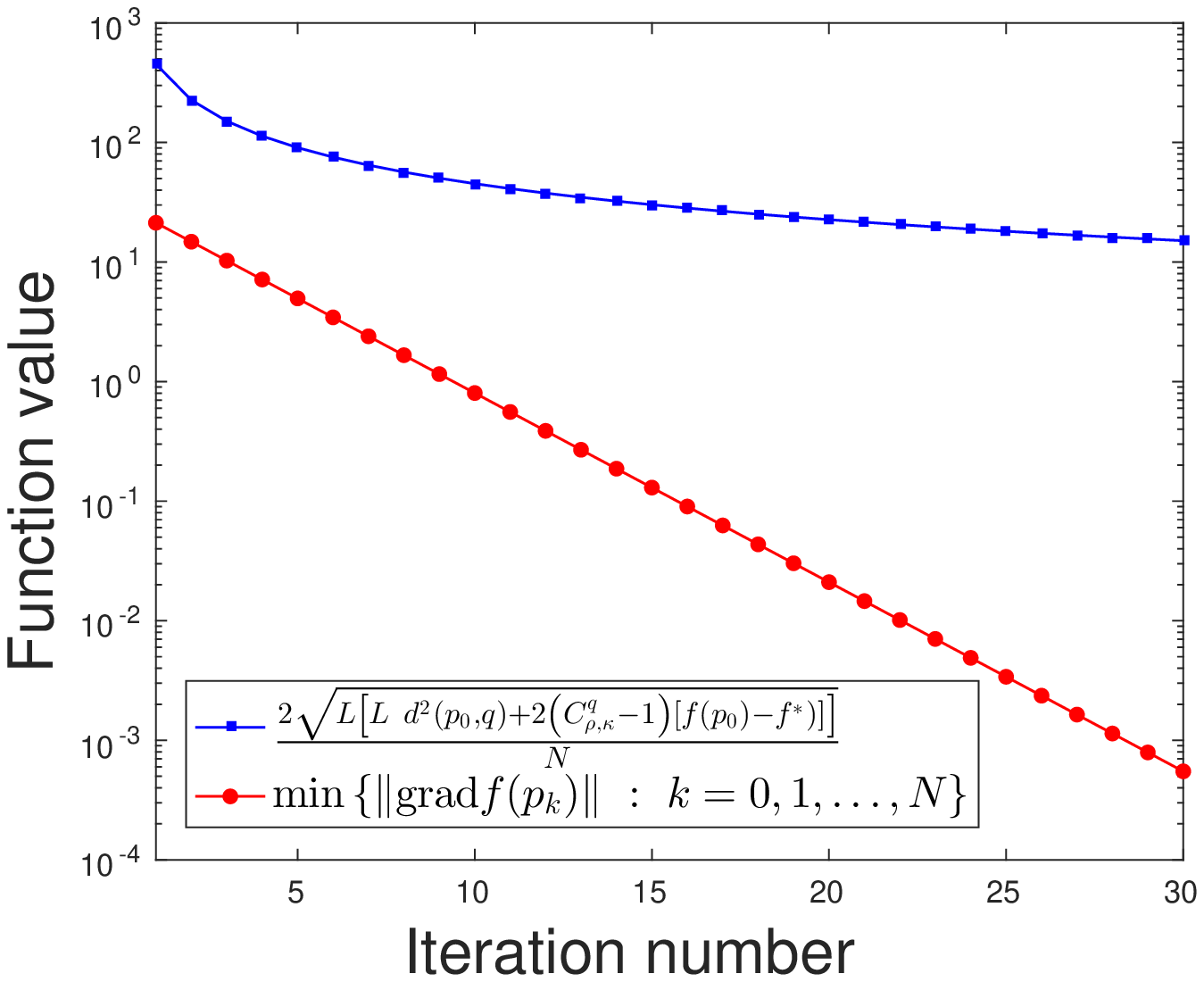}\\
	\footnotesize	(b)
\end{figure}
\end{minipage}\hfill
\caption{Iteration-complexity bound for the Riemannian gradient method with Lipschitz stepsize related to: (a) objective function value -- Theorem~\ref{th:grad2} ; (b) norm of the Riemannian gradient -- Corollary~\ref{cr:icgm}.}
\label{fig:prob5}
\end{figure}

As can be seen in Figure~\ref{fig:prob5}, the  iteration-complexity  bounds related to the objective function value and the norm of the Riemannian gradient are always respected, see Theorem~\ref{th:grad2} and Corollary~\ref{cr:icgm}. This illustrate the practical reliability of our iteration-complexity results.

\section{Conclusions}\label{Sec: Conclu}
In this paper,  the behavior of  the  gradient  method  for convex  optimization problems on Riemannian manifolds  with lower bounded sectional curvature were analyzed. We considered   three different finite procedures for determining the stepsize, namely, constant stepsize,  adaptive procedure  and  Armijo's procedure. As far as we know,  the full convergence of the sequence generated by this   method with these  three strategies  is a new contribution of this paper, which adds important results in the available convergence theory.  Besides, under mild assumptions,  we showed that the  iteration-complexity bound related to the  method  is $\mathcal{O} \left(1/\epsilon\right)$ for finding a point $p_N\in \mathcal{M}$ such that $ f(p_N)-f^*< \epsilon$.   The numerical experiments  provided  illustrate the effectiveness of the method in this new setting  and certify the conclusions suggested by the theoretical  results.  Despite the simple linesearch mechanisms employed here, the numerical results indicate that, as it has to be expected, the efficient implementation of linear search algorithms can significantly improve the Riemannian gradient method. In particular, the effectiveness of the method to find the Riemannian mass center and the so-called Karcher's mean is presented, indicating that the adaptive procedure is a promising scheme that is worth considering. We expect that this paper will contribute to the development of  studies of optimization methods in the Riemannian setting.  Finally, it would be interesting to analyze stochastic versions of the the gradient method by using adaptive procedures.  

\begin{thebibliography}{10}

\bibitem{AbsilMahonySepulchre2008}
P.-A. Absil, R.~Mahony, and R.~Sepulchre.
\newblock {\em Optimization algorithms on matrix manifolds}.
\newblock Princeton University Press, Princeton, NJ, 2008.
\newblock With a foreword by Paul Van Dooren.

\bibitem{AfsariTronVidal2013}
B.~Afsari, R.~Tron, and R.~Vidal.
\newblock On the convergence of gradient descent for finding the {R}iemannian
  center of mass.
\newblock {\em SIAM J. Control Optim.}, 51(3):2230--2260, 2013.

\bibitem{ANDO2004305}
T.~Ando, C.-K. Li, and R.~Mathias.
\newblock Geometric means.
\newblock {\em Linear Algebra and its Applications}, 385:305--334, 2004.

\bibitem{BeckTeboulle2009}
A.~Beck and M.~Teboulle.
\newblock A fast iterative shrinkage-thresholding algorithm for linear inverse
  problems.
\newblock {\em SIAM J. Imaging Sci.}, 2(1):183--202, 2009.

\bibitem{glaydstontr}
G.~C. Bento, S.~D.~B. Bitar, J.~X. Cruz~Neto, P.~R. Oliveira, and J.~C. Souza.
\newblock The steepest descent method for computing riemannian center of mass
  on hadamard manifolds.
\newblock {\em Technical report, submitted}, 2017.

\bibitem{BentoFerreiraMelo2017}
G.~C. Bento, O.~P. Ferreira, and J.~G. Melo.
\newblock Iteration-{C}omplexity of {G}radient, {S}ubgradient and {P}roximal
  {P}oint {M}ethods on {R}iemannian {M}anifolds.
\newblock {\em J. Optim. Theory Appl.}, 173(2):548--562, 2017.

\bibitem{BiniIannazzo2013}
D.~A. Bini and B.~Iannazzo.
\newblock Computing the {K}archer mean of symmetric positive definite matrices.
\newblock {\em Linear Algebra Appl.}, 438(4):1700--1710, 2013.

\bibitem{BoumalAbsilCartis2016}
N.~Boumal, P.-A. Absil, and C.~Cartis.
\newblock Global rates of convergence for nonconvex optimization on manifolds.
\newblock {\em ArXiv e-prints}, 1(1):1--31, 2016.

\bibitem{burachik1995full}
R.~Burachik, L.~M.~G. Drummond, A.~N. Iusem, and B.~F. Svaiter.
\newblock Full convergence of the steepest descent method with inexact line
  searches.
\newblock {\em Optimization}, 32(2):137--146, 1995.

\bibitem{da1998geodesic}
J.~da~Cruz~Neto, L.~De~Lima, and P.~Oliveira.
\newblock Geodesic algorithms in riemannian geometry.
\newblock {\em Balkan J. Geom. Appl}, 3(2):89--100, 1998.

\bibitem{daCruzNetoLimaOliveira1998}
J.~X. da~Cruz~Neto, L.~L. de~Lima, and P.~R. Oliveira.
\newblock Geodesic algorithms in {R}iemannian geometry.
\newblock {\em Balkan J. Geom. Appl.}, 3(2):89--100, 1998.

\bibitem{CruzNetoFerreiraLucambio2002}
J.~X. da~Cruz~Neto, O.~P. Ferreira, and L.~R. Lucambio~P\'erez.
\newblock Contributions to the study of monotone vector fields.
\newblock {\em Acta Math. Hungar.}, 94(4):307--320, 2002.

\bibitem{doCarmo1992}
M.~P. do~Carmo.
\newblock {\em Riemannian geometry}.
\newblock Mathematics: Theory \& Applications. Birkh\"auser Boston, Inc.,
  Boston, MA, 1992.
\newblock Translated from the second Portuguese edition by Francis Flaherty.

\bibitem{pprofile}
E.~D. Dolan and J.~J. Mor{\'e}.
\newblock Benchmarking optimization software with performance profiles.
\newblock {\em Mathematical programming}, 91(2):201--213, 2002.

\bibitem{EdelmanAriasSmith1999}
A.~Edelman, T.~A. Arias, and S.~T. Smith.
\newblock The geometry of algorithms with orthogonality constraints.
\newblock {\em SIAM J. Matrix Anal. Appl.}, 20(2):303--353, 1999.

\bibitem{FerreiraIusemNemeth2014}
O.~P. Ferreira, A.~N. Iusem, and S.~Z. N\'emeth.
\newblock Concepts and techniques of optimization on the sphere.
\newblock {\em TOP}, 22(3):1148--1170, 2014.

\bibitem{Gabay1982}
D.~Gabay.
\newblock Minimizing a differentiable function over a differential manifold.
\newblock {\em J. Optim. Theory Appl.}, 37(2):177--219, 1982.

\bibitem{JeurisVandebrilVandereycken2012}
B.~Jeuris, R.~Vandebril, and B.~Vandereycken.
\newblock A survey and comparison of contemporary algorithms for computing the
  matrix geometric mean.
\newblock {\em Electron. Trans. Numer. Anal.}, 39:379--402, 2012.

\bibitem{Lang1999}
S.~Lang.
\newblock {\em Fundamentals of differential geometry}, volume 191 of {\em
  Graduate Texts in Mathematics}.
\newblock Springer-Verlag, New York, 1999.

\bibitem{LengletRoussonDericheFaugeras2006}
C.~Lenglet, M.~Rousson, R.~Deriche, and O.~Faugeras.
\newblock Statistics on the manifold of multivariate normal distributions:
  theory and application to diffusion tensor {MRI} processing.
\newblock {\em J. Math. Imaging Vision}, 25(3):423--444, 2006.

\bibitem{LiMordukhovichWang2011}
C.~Li, B.~S. Mordukhovich, J.~Wang, and J.-C. Yao.
\newblock Weak sharp minima on {R}iemannian manifolds.
\newblock {\em SIAM J. Optim.}, 21(4):1523--1560, 2011.

\bibitem{LiYao2012}
C.~Li and J.-C. Yao.
\newblock Variational inequalities for set-valued vector fields on {R}iemannian
  manifolds: convexity of the solution set and the proximal point algorithm.
\newblock {\em SIAM J. Control Optim.}, 50(4):2486--2514, 2012.

\bibitem{Luenberger1972}
D.~G. Luenberger.
\newblock The gradient projection method along geodesics.
\newblock {\em Management Sci.}, 18:620--631, 1972.

\bibitem{Manton2015}
J.~H. Manton.
\newblock A framework for generalising the {N}ewton method and other iterative
  methods from {E}uclidean space to manifolds.
\newblock {\em Numer. Math.}, 129(1):91--125, 2015.

\bibitem{Nesterov2004}
Y.~Nesterov.
\newblock {\em Introductory lectures on convex optimization}, volume~87 of {\em
  Applied Optimization}.
\newblock Kluwer Academic Publishers, Boston, MA, 2004.
\newblock A basic course.

\bibitem{Nesterov2013}
Y.~Nesterov.
\newblock Gradient methods for minimizing composite functions.
\newblock {\em Math. Program.}, 140(1, Ser. B):125--161, 2013.

\bibitem{NesterovTodd2002}
Y.~E. Nesterov and M.~J. Todd.
\newblock On the {R}iemannian geometry defined by self-concordant barriers and
  interior-point methods.
\newblock {\em Found. Comput. Math.}, 2(4):333--361, 2002.

\bibitem{Rapcsak1997}
T.~Rapcs{\'a}k.
\newblock {\em Smooth nonlinear optimization in {$\bold R^n$}}, volume~19 of
  {\em Nonconvex Optimization and its Applications}.
\newblock Kluwer Academic Publishers, Dordrecht, 1997.

\bibitem{Raydan1997}
M.~Raydan.
\newblock The {B}arzilai and {B}orwein gradient method for the large scale
  unconstrained minimization problem.
\newblock {\em SIAM J. Optim.}, 7(1):26--33, 1997.

\bibitem{Rothaus1960}
O.~S. Rothaus.
\newblock Domains of positivity.
\newblock {\em Abh. Math. Sem. Univ. Hamburg}, 24:189--235, 1960.

\bibitem{Sakai1996}
T.~Sakai.
\newblock {\em Riemannian geometry}, volume 149 of {\em Translations of
  Mathematical Monographs}.
\newblock American Mathematical Society, Providence, RI, 1996.
\newblock Translated from the 1992 Japanese original by the author.

\bibitem{Smith1994}
S.~T. Smith.
\newblock Optimization techniques on {R}iemannian manifolds.
\newblock In {\em Hamiltonian and gradient flows, algorithms and control},
  volume~3 of {\em Fields Inst. Commun.}, pages 113--136. Amer. Math. Soc.,
  Providence, RI, 1994.

\bibitem{SraHosseini2015}
S.~Sra and R.~Hosseini.
\newblock Conic geometric optimization on the manifold of positive definite
  matrices.
\newblock {\em SIAM J. Optim.}, 25(1):713--739, 2015.

\bibitem{Udriste1994}
C.~Udri{\c{s}}te.
\newblock {\em Convex functions and optimization methods on {R}iemannian
  manifolds}, volume 297 of {\em Mathematics and its Applications}.
\newblock Kluwer Academic Publishers Group, Dordrecht, 1994.

\bibitem{WangLiWangYao2015}
X.~Wang, C.~Li, J.~Wang, and J.-C. Yao.
\newblock Linear convergence of subgradient algorithm for convex feasibility on
  {R}iemannian manifolds.
\newblock {\em SIAM J. Optim.}, 25(4):2334--2358, 2015.

\bibitem{WangLiYao2015}
X.~M. Wang, C.~Li, and J.~C. Yao.
\newblock Subgradient projection algorithms for convex feasibility on
  {R}iemannian manifolds with lower bounded curvatures.
\newblock {\em J. Optim. Theory Appl.}, 164(1):202--217, 2015.

\bibitem{Yuan2008}
Y.-x. Yuan.
\newblock Step-sizes for the gradient method.
\newblock In {\em Third {I}nternational {C}ongress of {C}hinese
  {M}athematicians. {P}art 1, 2}, volume~2 of {\em AMS/IP Stud. Adv. Math., 42,
  pt. 1}, pages 785--796. Amer. Math. Soc., Providence, RI, 2008.

\bibitem{ZhangReddiSra2016}
H.~{Zhang}, S.~J. {Reddi}, and S.~{Sra}.
\newblock Fast stochastic optimization on {R}iemannian manifolds.
\newblock {\em ArXiv e-prints}, pages 1--17, 2016.

\bibitem{ZhangSra2016}
H.~Zhang and S.~Sra.
\newblock First-order methods for geodesically convex optimization.
\newblock {\em JMLR: Workshop and Conference Proceedings}, 49(1):1--21, 2016.

\end{thebibliography}
\def\cprime{$'$}

\end{document}